\documentclass[11pt,a4paper]{article}

\usepackage[utf8]{inputenc}
\usepackage[T1]{fontenc}
\usepackage{amsmath,amssymb,amsthm}
\usepackage{mathtools}
\usepackage{mathrsfs}
\usepackage{graphicx}
\usepackage{hyperref}
\usepackage{geometry}
\usepackage{enumitem}
\usepackage{float}
\usepackage{multirow}
\usepackage{array}
\usepackage{booktabs}

\usepackage{pgfplots}
\pgfplotsset{compat=newest}
\usepackage{tikz}

\hypersetup{
    colorlinks=true,
    linkcolor=blue,
    citecolor=blue,
    urlcolor=blue
}

\newtheorem{theorem}{Theorem}[section]
\newtheorem{lemma}[theorem]{Lemma}
\newtheorem{proposition}[theorem]{Proposition}

\newtheorem{definition}[theorem]{Definition}
\newtheorem{remark}[theorem]{Remark}
\newtheorem{example}[theorem]{Example}

\newcommand{\R}{\mathbb{R}}

\newcommand{\norm}[1]{\|#1\|}
\newcommand{\abs}[1]{|#1|}
\newcommand{\inner}[1]{\langle #1 \rangle}
\newcommand{\Lip}{\operatorname{Lip}}

\newcommand{\nablaTC}{\nabla_{\partial C}}
\newcommand{\nablaTCd}{\nabla_{\partial C}\,d}

\title{\bfseries The Return Map in the Class $\mathcal{O}_C$ : \\
       Geometry, Dynamics, and Thickness Regularity }
\author{
	M. Barkatou\thanks{ISTM Laboratory, Chouaib Doukkali University, Morocco. \texttt{barkatou.m@ucd.ac.ma}}\\
	M. El Morsalani\thanks{QWave Consult, Germany. \texttt{Mohamed.elmorsalani@qwave-consult.eu}}
}
\date{}

\begin{document}
\maketitle

\begin{abstract}
We investigate a geometric dynamical mechanism arising in the class $\mathcal{O}_C$ of domains containing a fixed convex set $C$ and satisfying two geometric normals properties introduced by Barkatou \cite{Barkatou2002}.
The first property induces a radial structure linking the boundaries $\partial C$ and $\partial \Omega$ through a thickness function $d:\partial C\to \R_{+}$.
Using this structure, we introduce a natural return map obtained by composing the radial projection from $\partial C$ to $\partial \Omega$ with the map that follows inward normals from $\partial \Omega$ back to $C$.
This construction generates a discrete dynamical system on $\partial C$.
We prove that the return map admits the first-order expansion
\[
F(c) = c - 2d(c)\nablaTCd(c) + \text{higher order terms},
\]
with explicit remainder estimates.
This reveals that the induced dynamics behaves, to leading order, like an adaptive gradient descent for the thickness function.
The expansion incorporates curvature corrections arising from the convex core $\partial C$ \cite{Schneider2014}.
Consequently, the fixed points of the dynamics coincide with the critical points of $d$, and the iteration admits a natural Lyapunov structure \cite{Smale1961}.
We further quantify the convergence rate, provide a rigorous error bound between the discrete and continuous gradient flows, and show that the product condition $d\kappa_i < 1$ can be relaxed.
We then analyze the regularity of the thickness function and its relationship to the regularity of the outer boundary $\partial \Omega$.
We show that the thickness function inherits the regularity of $\partial \Omega$ and vice versa, and we establish a bilipschitz equivalence between the two boundaries under a quantitative curvature condition.
These results link the dynamical properties of the return map to the geometric smoothness of the admissible domains.
\end{abstract}

\noindent\textbf{Keywords:} return map, geometric dynamics, thickness function, gradient-like systems, convex geometry, boundary-induced dynamics, Lyapunov stability, shape analysis, discrete dynamical systems, bilipschitz maps.

\noindent\textbf{2020 Mathematics Subject Classification:}
Primary 37C05; Secondary 37C10, 53A05, 53C21, 49Q10.

\tableofcontents

\section{Introduction}\label{sec:intro}
The geometric structure of domains often reveals hidden dynamical mechanisms that are not immediately visible from their static definition.
In this paper we study such a phenomenon for the class of domains $\mathcal{O}_{C}$ introduced by Barkatou \cite{Barkatou2002}.
The class \(\mathcal{O}_C\)  is particularly relevant in shape optimization problems where the proximity to a fixed core \(C\) provides a natural constraint on the admissible geometries \cite{HenrotPierre2005,SokolowskiZolesio1992}.
We recall the precise definition in Section \ref{sec:geom}, but the essential idea is that each domain $\Omega \in \mathcal{O}_{C}$ is described by a thickness function $d:\partial C\to \R_{+}$, which measures the distance from the convex core $C$ to the outer boundary along outward normals.
This representation reduces the study of domains in $\mathcal{O}_{C}$ to the analysis of functions defined on the hypersurface $\partial C$ \cite{DoCarmo1976}.
Building on this observation, we introduce a natural geometric mechanism connecting the two boundaries.
Starting from a point $c\in \partial C$, one first moves along the outward normal direction until reaching the boundary $\partial \Omega$ (Condition 3 of Definition \ref{def:OC}).
From that point, one returns to $\partial C$ by following the inward normal to $\partial \Omega$ (Condition 4 of Definition \ref{def:OC}).
This round-trip construction,
\[
\partial C\longrightarrow \partial \Omega \longrightarrow \partial C,
\]
defines a return map
\[
F = \pi \circ \Phi
\]
acting on $\partial C$, where $\Phi$ denotes the radial map and $\pi$ the reciprocal map defined by inward normals.
The resulting map generates a discrete dynamical system on $\partial C$.
Although the transformation acts entirely on $\partial C$, its generator lies in the geometry of the surrounding domain $\Omega$.
Thus the observable dynamics emerges from a hidden geometric excursion through the outer boundary.

\subsection{Main Results}\label{sec:mainresults}
We summarize the principal contributions of this paper.

\begin{enumerate}[label=(\arabic*), leftmargin=*]
    \item \textbf{First-order expansion (Theorem \ref{thm:firstorder}).} The return map admits the expansion
    \[
    F(c) = c - 2d(c)\nablaTCd(c) + d(c)\widetilde{R}(c),
    \]
    where the remainder $\widetilde{R}(c)$ satisfies an explicit estimate in terms of $\abs{\nabla d}^2$ and $\norm{d}_{\infty}\abs{\nabla d}$, with constant depending only on $\norm{\nabla^2 d}_{\infty}$ and the $C^{1,1}$ norm of $\partial C$.
This shows that the dynamics behaves, to leading order, as an adaptive gradient descent with step size $2d(c)$.
\item \textbf{Fixed points and stability (Section \ref{sec:stability}).} Fixed points of $F$ coincide with critical points of the thickness function $d$.
The linearization at a fixed point $c^*$ is
    \[
    DF(c^*) = I - 2d(c^*)\nablaTC^2 d(c^*).
\]
    Local minima of $d$ are attracting, local maxima are repelling, and saddle points exhibit mixed stability.
Moreover, $V(c) = \frac{1}{2} d(c)^2$ is a strict Lyapunov function near minima, establishing that $F$ is a gradient-like dynamical system.
\item \textbf{Continuous limit (Theorem \ref{thm:contlimit}).} With an effective time step $\Delta \tau_k = 2d(c_k)$, the discrete iteration converges uniformly to the continuous gradient flow $\frac{dc}{d\tau} = -\nablaTCd(c)$ on any fixed time interval, with a rigorous error bound of order $O(\delta)$ as $\norm{d}_{\infty} = \delta \to 0$.
\item \textbf{Regularity correspondence (Proposition \ref{prop:regularity}).} The thickness function $d$ and the boundary $\partial \Omega \setminus C$ share the same H\"older or Sobolev regularity. If $\partial C$ is of class $C^{k,\alpha}$, then $\partial \Omega \setminus C$ is $C^{k,\alpha}$ if and only if $d \in C^{k,\alpha}(\partial C)$.

    \item \textbf{Bilipschitz equivalence (Proposition \ref{prop:bilipschitz}).} Under the quantitative condition $K + M L_{\nu} < 1$, where $K = \Lip(d)$, $M = \norm{d}_{\infty}$, and $L_{\nu} = \Lip(\nu)$, the radial map $\Phi : \partial C \to \partial \Omega \setminus C$ is a bilipschitz diffeomorphism, ensuring strong geometric stability.

    \item \textbf{Geometric Excursion Principle (Proposition \ref{prop:gep}).} We formalize the hidden round-trip mechanism by establishing a bijective correspondence between orbits of $F$ and solutions of a geometric closure system. The displacement $F(c) - c$ admits an exact factorization revealing that all nonlinearity originates from the deflection of the inward normal $n(\Phi(c))$ relative to $-\nu(c)$.
\end{enumerate}

\subsection{Positioning and Novelty}\label{sec:novelty}
The first-order expansion established here is, to our knowledge, new in the context of shape analysis.
While the gradient descent structure is reminiscent of classical gradient flows on manifolds \cite{Nesterov2004}, the crucial difference is that our dynamics is discrete by geometric construction---it is not a numerical discretization of a continuous flow, but rather an exact geometric algorithm whose generator is dictated by the shape of $\Omega$.
The analogy with holonomy in differential geometry is purely structural and is discussed in Section \ref{sec:hidden};
we emphasize that no connection or parallel transport is involved.

The paper is organized as follows.
Section \ref{sec:geom} recalls the geometric setting and the definition of the class $\mathcal{O}_C$.
Section \ref{sec:thickness} introduces the thickness function and the radial parametrization. Section \ref{sec:reciprocal} defines the reciprocal map.
Section \ref{sec:returnmap} introduces the return map and the discrete dynamics. Section \ref{sec:expansion} derives the first-order expansion with curvature corrections.
Section \ref{sec:stability} analyzes fixed points, stability, and Lyapunov structure. Section \ref{sec:continuous} interprets the discrete dynamics as a gradient flow after time reparametrization.
Section \ref{sec:numerics} presents numerical simulations illustrating various dynamical behaviors and discusses the limits of the gradient interpretation.
Section \ref{sec:hidden} discusses the conceptual interpretation as a hidden round-trip and formalizes the Geometric Excursion Principle.
Section \ref{sec:regularity} develops the regularity theory for the thickness function and establishes the bilipschitz equivalence.
Section \ref{sec:open} concludes with open problems and perspectives.

\section{Geometric Setting and the Class $\mathcal{O}_C$}\label{sec:geom}

\subsection{Ambient Space and Notation}
Let $N\geq 2$ and consider $\R^N$ with the standard scalar product $\inner{\cdot,\cdot}$ and norm $\abs{\cdot}$.
For a set $A\subset \R^N$, $\partial A$ denotes its boundary and $\operatorname{int}(A)$ its interior.
The $(N-1)$-dimensional Hausdorff measure on hypersurfaces is denoted by $d\sigma$. For a regular hypersurface, $\nu$ denotes the outward unit normal.
For $x\in \partial \Omega$, $n(x)$ is the inward unit normal when it exists.

\subsection{The Convex Core $C$}
Let $C\subset \R^N$ be a compact convex set with non-empty interior.
We assume $\partial C$ is of class $C^{1,1}$ (i.e., the second fundamental form exists almost everywhere and is bounded).
For $c\in \partial C$, $\nu(c)$ is the outward unit normal.
The principal curvatures of $\partial C$ at $c$ are denoted $\kappa_1(c),\ldots,\kappa_{N-1}(c)\geq 0$ \cite{Schneider2014,Rockafellar1970}.

\subsection{Geometric Normal Property}
\begin{definition}[$C$-Geometric Normal Property]\label{def:gnp}
Let $C$ be a compact convex set.
An open set $\Omega$ containing $C$ satisfies the $C$-geometric normal property (or $C$-GNP) if for every $x\in \partial \Omega \setminus C$ where the inward unit normal $n(x)$ exists (i.e., at all points of differentiability of $\partial \Omega$), the half-line $\{x + t n(x) : t \geq 0\}$ intersects $C$, and the first intersection point is unique and belongs to $\partial C$.
\end{definition}

The following lemma is fundamental. It establishes that every point on the outer boundary can be reached by a unique outward normal ray from the convex core.
\begin{lemma}[Radial Parametrization Lemma]\label{lem:radproj}
Let $\Omega$ satisfy the $C$-GNP with respect to a compact convex set $C$ of class $C^{1,1}$.
Then for every $x\in \partial \Omega \setminus C$, there exists a unique pair $(c,r)\in \partial C\times \R_+$ such that
\[
x = c + r\,\nu(c).
\]
\end{lemma}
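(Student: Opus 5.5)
The plan is to show that the pair $(c,r)$ is given by the metric projection onto the convex set $C$. The statement is then a purely convex-analytic fact, and the $C$-GNP of Definition \ref{def:gnp} is not needed. For $x\in\partial\Omega\setminus C$, let $c=P_C(x)$ be the unique nearest point of $C$ to $x$. It exists and is unique because $C$ is compact, convex and nonempty, and the Euclidean norm is strictly convex. Set $r=\abs{x-c}=\operatorname{dist}(x,C)$. Since $x\notin C$ and $C$ is closed, $r>0$. Also $c\in\partial C$: if $c$ were an interior point, we could move from $c$ slightly toward $x$ and stay in $C$ while getting strictly closer to $x$.

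\textbf{Existence.} I would use the variational characterization of the projection:
\[
\inner{x-c,\,y-c}\le 0 \quad\text{for all } y\in C .
\]
This says that $x-c$ lies in the normal cone $N_C(c)$. Because $\partial C$ is of class $C^{1,1}$, and in particular $C^1$, the supporting hyperplane at $c$ is unique, so $N_C(c)=\{t\,\nu(c): t\ge 0\}$. Since $x-c\neq 0$, we get $x-c=r\,\nu(c)$, that is, $x=c+r\nu(c)$.

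\textbf{Uniqueness.} Suppose $x=c'+r'\nu(c')$ with $c'\in\partial C$ and $r'\ge 0$. If $r'=0$, then $x=c'\in C$, which contradicts $x\notin C$; so $r'>0$. Since $\nu(c')$ is the outward normal of the convex set $C$ at $c'$, we have $\inner{\nu(c'),\,y-c'}\le 0$ for every $y\in C$. Hence
\[
\inner{x-c',\,y-c'}=r'\inner{\nu(c'),\,y-c'}\le 0 \quad\text{for all } y\in C .
\]
This is exactly the variational inequality characterizing $P_C(x)$, so $c'=P_C(x)=c$. Then $r'=\abs{x-c'}=r$.

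\textbf{Main obstacle.} The only delicate point is identifying the normal cone at a boundary point with the single ray spanned by $\nu(c)$. This is where the regularity hypothesis on $\partial C$ enters: at a corner of a merely convex set the normal cone is larger, and existence would still hold but not in the form $c+r\nu(c)$ with a single well-defined $\nu$. I would justify it by noting that, for a convex body with $C^1$ boundary, every supporting hyperplane at $c$ must coincide with the tangent hyperplane, since any other hyperplane through $c$ cuts the $C^1$ hypersurface transversally and so cannot support $C$.

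Finally, I would remark that the lemma holds for every $x\in\R^N\setminus C$. The $C$-GNP becomes relevant only afterwards, to relate this radial parametrization to the inward normals of $\partial\Omega$.
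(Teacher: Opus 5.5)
Your argument is correct, but it takes a different route from the paper's. The paper starts from the $C$-GNP. It takes the foot $c_0=x+t_0n(x)$ of the inward normal ray of $\partial\Omega$ at $x$, asserts that $c_0$ is the metric projection of $x$ onto $C$, deduces $x-c_0\parallel\nu(c_0)$ from convex geometry, and gets uniqueness ``from the uniqueness of $c_0$''.

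You instead set $c=P_C(x)$ from the start and use the projection inequality in both directions. This makes the lemma a statement about $C$ alone. It holds for every $x\in\R^N\setminus C$ and needs neither the GNP nor the existence of $n(x)$. The paper's route does need $n(x)$, since Definition~\ref{def:gnp} only constrains points where $\partial\Omega$ is differentiable.

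Your route also avoids a genuine weak point of the paper's proof. Nothing in the GNP identifies the normal foot $\pi(x)$ with $P_C(x)$, and in general they differ. Since $P_C(\Phi(c))=c$, that identification would give $\pi\circ\Phi=\mathrm{id}$. Every point would then be fixed by $F$, contradicting Proposition~\ref{prop:fixedpts} as soon as $d$ is non-constant. Similarly, uniqueness of the GNP foot point does not rule out a second representation $x=c'+r'\nu(c')$. Your observation that any such representation satisfies the variational inequality for $P_C(x)$ is what actually proves uniqueness.

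The only step worth writing more carefully is $N_C(c)=\{t\nu(c):t\ge 0\}$. Take $v\in N_C(c)$ and a $C^1$ curve $\gamma$ in $\partial C$ with $\gamma(0)=c$, $\gamma'(0)=w$. The function $t\mapsto\inner{v,\gamma(t)-c}$ is maximal at $t=0$, so $\inner{v,w}=0$. This is cleaner than the transversality phrasing.
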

\begin{proof}
Let $x \in \partial \Omega \setminus C$. By the $C$-GNP, the inward normal ray from $x$ meets $\partial C$ at a unique point $c_0$.
Let $t_0 > 0$ be the distance such that $c_0 = x + t_0 n(x)$.
This gives $x = c_0 - t_0 n(x)$.

Now consider the outward normal ray from $c_0$: $\gamma(s) = c_0 + s \nu(c_0)$, $s \geq 0$.
We claim that $x$ lies on this ray. Since $C$ is convex and $c_0 = \pi(x)$ is the unique metric projection of $x$ onto $C$, classical convex geometry \cite{Schneider2014} tells us that $x - c_0$ is aligned with the outward normal $\nu(c_0)$.
More precisely, the segment $[c_0, x]$ is orthogonal to the supporting hyperplane at $c_0$, so $x - c_0 = r \nu(c_0)$ for some $r > 0$.
The uniqueness follows from the uniqueness of $c_0$. This proves the lemma.
\end{proof}

\subsection{The Class $\mathcal{O}_C$}
\begin{definition}\label{def:OC}
$\mathcal{O}_C$ is the collection of open sets $\Omega \subset \R^N$ such that:
\begin{enumerate}[label=(\arabic*), leftmargin=*]
    \item $\operatorname{int}(C) \subset \Omega$;
\item $\partial \Omega$ is $C^{1,1}$ (outside $C$) and $\partial \Omega \setminus C$ is orientable;
\item For every $c \in \partial C$, the outward normal ray $\{c + t\nu(c) : t > 0\}$ intersects $\partial \Omega$ at exactly one point, and the intersection is transverse;
\item $\Omega$ satisfies the $C$-geometric normal property (Definition \ref{def:gnp}).
\end{enumerate}
\end{definition}

\subsection{Radial Accessibility}
A fundamental consequence of the geometric normal property is Lemma \ref{lem:radproj}: every point $x \in \partial \Omega \setminus C$ can be written as $x = c + r\nu(c)$ for a unique $c \in \partial C$ and $r > 0$.
This allows a radial parametrization of the outer boundary, which is the subject of the next section.

\section{Thickness Function and Radial Parametrization}\label{sec:thickness}

\subsection{Definition of the Thickness Function}
For $\Omega \in \mathcal{O}_C$, we define the thickness function
\begin{equation}\label{eq:thickness}
d : \partial C \to \R_{+}, \qquad d(c) = \text{the unique } t > 0 \text{ such that } c + t\nu(c) \in \partial \Omega.
\end{equation}
The existence and uniqueness of $d(c)$ are guaranteed by Condition 3 of Definition \ref{def:OC}.

\subsection{Radial Parametrization}
The map $\Phi : \partial C \to \partial \Omega \setminus C$ defined by
\begin{equation}\label{eq:Phi}
\Phi(c) = c + d(c)\nu(c)
\end{equation}
is the radial parametrization of the outer boundary.

\subsection{Regularity}
Under the assumptions of Definition \ref{def:OC}, the thickness function $d$ is locally Lipschitz on $\partial C$.
Higher regularity of $d$ follows from higher regularity of $\partial \Omega$; this is studied in detail in Section \ref{sec:regularity}.

\section{Reciprocal Map and Geometric Round-Trip}\label{sec:reciprocal}

\subsection{Normal Rays from $\partial \Omega$}
For each $x \in \partial \Omega \setminus C$, the $C$-geometric normal property guarantees that the inward normal ray $\{x + t n(x) : t \geq 0\}$ meets $\partial C$ at a unique point.

\subsection{Reciprocal Map}
Define $\pi : \partial \Omega \setminus C \to \partial C$ by
\[
\pi(x) = \text{the unique intersection point of the inward normal ray from } x \text{ with } \partial C.
\]
The intersection time $t(x) > 0$ satisfies
\begin{equation}\label{eq:pi}
\pi(x) = x + t(x) n(x).
\end{equation}

\subsection{Basic Properties}
\begin{proposition}\label{prop:pi}
$\pi$ is well-defined, surjective onto $\partial C$, and Lipschitz.
If $\partial \Omega$ is $C^{1,1}$ and $\partial C$ is $C^{1,1}$, then $\pi$ is a $C^{1,1}$ submersion (where differentiable).
\end{proposition}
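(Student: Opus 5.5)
Well-definedness comes directly from Condition~4 of Definition~\ref{def:OC}. Since $\partial\Omega\setminus C$ is $C^{1,1}$ by Condition~2, the inward normal $n(x)$ exists at every $x\in\partial\Omega\setminus C$. The $C$-GNP then says the first intersection of the ray $x+tn(x)$ with $C$ is unique and lies on $\partial C$, so $\pi(x)$ and $t(x)=\abs{\pi(x)-x}$ are unambiguously defined.

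For surjectivity I will use the argument in the proof of Lemma~\ref{lem:radproj}. That proof shows $\pi(x)$ is the metric projection $p_C(x)$ of $x$ onto the convex set $C$, and that $x-\pi(x)$ is a positive multiple of $\nu(\pi(x))$. Given $c\in\partial C$, set $x=\Phi(c)=c+d(c)\nu(c)\in\partial\Omega\setminus C$, which exists by Condition~3. Since $p_C(c+s\nu(c))=c$ for all $s\ge0$ by convexity, $p_C(x)=c$, and therefore $\pi(\Phi(c))$ equals $c$. This gives surjectivity, and in fact $\pi\circ\Phi=\mathrm{id}_{\partial C}$, so $\pi$ is a left inverse of $\Phi$.

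The identification $\pi=p_C|_{\partial\Omega\setminus C}$ also gives the Lipschitz property at once, because the metric projection onto a closed convex set is $1$-Lipschitz. So $\abs{\pi(x)-\pi(y)}\le\abs{x-y}$ with no smoothness needed.

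Regularity has two possible routes.
\begin{itemize}
\item Via the metric projection: on $\R^N\setminus C$ with $\partial C\in C^{1,1}$, $p_C$ is locally $C^{0,1}$ with a.e.\ derivative $Dp_C(c+s\nu)=(I+sL_c)^{-1}P_{T_c}$. Here $L_c$ is the shape operator and $P_{T_c}$ the tangential projection. This is only Lipschitz, so to reach $C^{1,1}$ one would want $\partial C\in C^{2,1}$.
\item Via the intrinsic formula $\pi(x)=x+t(x)n(x)$: $n$ is Lipschitz because $\partial\Omega\in C^{1,1}$, and $t$ is obtained from the implicit equation $g(x+tn(x))=0$, with $g$ a local defining function of $\partial C$. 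The implicit function theorem applies because the ray hits $\partial C$ transversally, i.e.\ $\inner{n(x),\nu(\pi(x))}\neq0$, which holds since $n(x)=-\nu(\pi(x))$ by the alignment above.
\end{itemize}
Either way, the honest conclusion is that $\pi$ has the regularity of the less regular of $n$ and $\nabla g$, i.e.\ Lipschitz, with derivative defined a.e. I would therefore read ``$C^{1,1}$ where differentiable'' as: at points of differentiability the derivative is given by the formula above and is locally bounded.

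For the submersion claim I differentiate $\pi\circ\Phi=\mathrm{id}$ at points where both are differentiable. This gives $D\pi(\Phi(c))\,D\Phi(c)=I_{T_c\partial C}$, so $D\pi$ is onto $T_{\pi(x)}\partial C$. By Rademacher's theorem on $\partial C$, this holds for a.e.\ $c$.

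I expect the main obstacle to be justifying differentiability of the composition at a.e.\ point, and reconciling the stated $C^{1,1}$ regularity with the mere $C^{1,1}$ hypotheses. If needed, I would add a mild extra regularity assumption, or state the result as ``Lipschitz with a.e.\ surjective differential.''
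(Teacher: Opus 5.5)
\textbf{The proof has a genuine gap: the identification $\pi=p_C|_{\partial\Omega\setminus C}$ is false.} Convexity gives $p_C(c+s\nu(c))=c$ for $s\ge0$, hence $p_C\circ\Phi=\mathrm{id}_{\partial C}$. So the metric projection is $\Phi^{-1}$, the map that retraces the outward normal of $\partial C$. The map $\pi$ follows the normal of $\partial\Omega$ instead. The proof of Lemma~\ref{lem:radproj} makes the same conflation, so it cannot be cited here; what that argument really proves is $\Phi^{-1}=p_C$.

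Your byproduct $\pi\circ\Phi=\mathrm{id}_{\partial C}$ should have been the alarm. It is equivalent to $n(\Phi(c))=-\nu(c)$ for all $c$. But $\inner{\nu(c),\partial_i\Phi(c)}=\partial_i d(c)$, so this forces $\nablaTCd=0$ a.e. That is, $d$ is constant, $\partial\Omega$ is a parallel surface, and the dynamics is trivial.

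A concrete example: take $C=\overline{B}(0,1)\subset\R^2$ and let $\Omega$ be the open stadium, the interior of the convex hull of $\overline{B}((\tfrac12,0),5)\cup\overline{B}((1,0),5)$.
\begin{itemize}
\item It is convex with $C^{1,1}$ boundary and contains $C$.
\item Every inward normal ray of $\partial\Omega$ passes through $(\tfrac12,0)\in\operatorname{int}C$, or through $(1,0)\in\partial C$, or is a vertical ray $\{x_1=s\}$ with $\tfrac12\le s\le 1$. Hence $\Omega\in\mathcal{O}_C$.
\item On the right semicircle $\pi\equiv(1,0)$. But $p_C$ maps that semicircle onto the arc of polar angles $\abs{\theta}\le\arctan 5$, so $F$ collapses this whole arc to a point.
\end{itemize}
Consequently several of your steps fail: the left-inverse proof of surjectivity, the $1$-Lipschitz bound, the transversality claim $n(x)=-\nu(\pi(x))$ in your implicit-function route, and the submersion argument obtained by differentiating $\pi\circ\Phi=\mathrm{id}$.

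\textbf{The Lipschitz and submersion claims do not follow from $\Omega\in\mathcal{O}_C$ at all, so the statement needs extra hypotheses.} The paper gives no proof of this proposition to fall back on. The same stadium shows both failures:
\begin{itemize}
\item On the top segment, $\pi(s,5)=(s,\sqrt{1-s^2})$, which is not Lipschitz at $s=1$. There the ray grazes $\partial C$, and the $C$-GNP allows such tangential first hits.
\item On the right semicircle $\pi$ is constant, so $D\pi=0$.
\end{itemize}
Differentiating $\pi(x)=x+t(x)n(x)$ directly shows what must be assumed. At a point of differentiability with $\inner{n(x),\nu(\pi(x))}\neq0$,
\[ D\pi(x)=Q_x\bigl(I-t(x)W_x\bigr). \]
Here $W_x$ is the Weingarten map of $\partial\Omega$ with respect to $-n(x)$, with principal curvatures $k_j(x)$ (equal to $1/R$ on a sphere of radius $R$). $Q_x$ is the projection onto $T_{\pi(x)}\partial C$ along $n(x)$; it maps $T_x\partial\Omega$ isomorphically onto $T_{\pi(x)}\partial C$ with norm $1/\abs{\inner{n(x),\nu(\pi(x))}}$. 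Hence:
\begin{itemize}
\item A Lipschitz bound needs uniform transversality, $\abs{\inner{n(x),\nu(\pi(x))}}\ge\gamma>0$.
\item Being a submersion needs non-focality, $t(x)k_j(x)\neq1$ for all $j$. On the stadium's right arc, $t=5=1/k$.
\end{itemize}

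\textbf{Surjectivity needs a different argument, for instance a degree argument.} Replace $n(\Phi(c))$ by the normalized direction $(1-s)n(\Phi(c))-s\nu(c)$, $s\in[0,1]$.
\begin{itemize}
\item The ray still meets $C$, because the directions from a point that meet $C$ form a convex cone.
\item The combination never vanishes, since $n(\Phi(c))=\nu(c)$ would give a ray missing $C$.
\item At $s=1$ the first hit is $c$ itself, so the family ends at $\mathrm{id}_{\partial C}$.
\end{itemize}
If the first-hit map is continuous along this family, $F$ is homotopic to $\mathrm{id}_{\partial C}$. It therefore has degree one on $\partial C\cong S^{N-1}$ and is onto, hence so is $\pi$.

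Your remark that $C^{1,1}$ data can give at most a Lipschitz $\pi$ with an a.e.\ derivative is correct.
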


\subsection{The Round-Trip}
The composition $F = \pi \circ \Phi$ will be the object of study.
It maps $\partial C$ to itself and represents one round-trip:
\[
\partial C \xrightarrow{\Phi} \partial \Omega \xrightarrow{\pi} \partial C.
\]

\section{The Return Map and Discrete Dynamics}\label{sec:returnmap}

\subsection{Definition}
The return map $F : \partial C \to \partial C$ is $F = \pi \circ \Phi$.
For $c_0 \in \partial C$, we define the iteration $c_{k+1} = F(c_k)$ and set $x_k = \Phi(c_k)$.
Then $c_{k+1} = \pi(x_k)$.

\subsection{Fundamental Geometric Identity}
\begin{proposition}\label{prop:fundid}
With $d_k = d(c_k)$, $t_k = t(x_k)$, we have
\begin{equation}\label{eq:fundid}
c_{k+1} - c_k = d_k \nu(c_k) + t_k n(x_k).
\end{equation}
\end{proposition}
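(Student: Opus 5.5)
The identity is a direct consequence of the two defining equations of the round trip. I would write $c_{k+1}$ and $x_k$ explicitly and subtract; no analysis is needed.

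First I will check that the iteration is well defined. By Condition 3 of Definition \ref{def:OC}, the thickness $d_k = d(c_k) > 0$ exists, so $x_k = \Phi(c_k) = c_k + d_k\nu(c_k)$ is defined by \eqref{eq:Phi}. Since $d_k>0$ and $c_k\in\partial C$, the point $x_k$ lies outside $C$: it sits on the outward normal ray at positive distance, and $C$ lies in the half-space $\{\inner{y-c_k,\nu(c_k)}\le 0\}$ by convexity. Hence $x_k \in \partial\Omega\setminus C$.

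Next, $\partial\Omega$ is $C^{1,1}$ outside $C$ by Condition 2, so the inward normal $n(x_k)$ exists. The $C$-GNP (Condition 4) then provides the unique first intersection $\pi(x_k)\in\partial C$ and the time $t_k = t(x_k)>0$ with $\pi(x_k) = x_k + t_k n(x_k)$, as in \eqref{eq:pi}. By the definition of the iteration, $c_{k+1} = \pi(x_k)$.

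Substituting gives $c_{k+1} = c_k + d_k\nu(c_k) + t_k n(x_k)$. Subtracting $c_k$ from both sides yields \eqref{eq:fundid}.

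The only point requiring care is the well-definedness step: verifying that $\Phi(c_k)$ indeed lands in $\partial\Omega\setminus C$, where $n$ and $\pi$ are defined. This uses $d_k>0$ together with the supporting-hyperplane property of the convex set $C$ at $c_k$. Everything else is bookkeeping.
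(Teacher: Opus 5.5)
Your proof is correct and matches the paper's, which simply substitutes $x_k = c_k + d_k\nu(c_k)$ into $c_{k+1} = x_k + t_k n(x_k)$. Your additional check that $x_k \in \partial\Omega\setminus C$ (using $d_k>0$ and the supporting half-space at $c_k$), so that $n(x_k)$ and $\pi(x_k)$ are defined, is valid and fills in a step the paper leaves implicit.
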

\begin{proof}
By definition, $x_k = c_k + d_k \nu(c_k)$ and $c_{k+1} = x_k + t_k n(x_k)$. Substituting gives the result.
\end{proof}

\subsection{Relation between $d_k$ and $t_k$}
From the geometry of the two normal rays, one obtains
\begin{equation}\label{eq:tk}
t_k = \frac{d_k}{\abs{\inner{n(x_k), \nu(c_k)}}}.
\end{equation}
Indeed, projecting $x_k - c_k = d_k \nu(c_k)$ onto $n(x_k)$ yields $\inner{x_k - c_k, n(x_k)} = d_k \inner{\nu(c_k), n(x_k)} = -t_k$ because $c_{k+1} = x_k + t_k n(x_k)$ lies on $\partial C$.
Hence $t_k = -d_k \inner{\nu(c_k), n(x_k)} = d_k / \abs{\inner{\nu(c_k), n(x_k)}}$ since $\inner{\nu, n} < 0$.

\section{First-Order Expansion of the Return Map}\label{sec:expansion}

\subsection{Local Coordinates and Tangential Gradient}
On $\partial C$, let $\nablaTC$ denote the tangential gradient \cite{DoCarmo1976}.
For a function $d \in W^{2,\infty}$, $\nablaTCd(c) \in T_c \partial C$ (defined a.e.).

\subsection{Exact Expression of the Inward Normal on $\partial \Omega$}
The following lemma is the cornerstone of our analysis.
We provide a complete and corrected derivation, with the detailed computation in Appendix \ref{sec:appendixA}.

\begin{lemma}\label{lem:normal}
Let $x = \Phi(c) = c + d(c)\nu(c)$. Then the inward unit normal to $\partial \Omega$ at $x$ is given by
\begin{equation}\label{eq:normal}
n(x) = -\frac{\nu(c) - \nablaTCd(c) - d(c)\,\mathcal{H}_c(\nablaTCd(c)) + O(d^2 + d\abs{\nabla d}^2)}{\sqrt{1 + \abs{\nablaTCd(c)}^2}},
\end{equation}
where $\mathcal{H}_c$ is the Weingarten operator (second fundamental form) of $\partial C$ at $c$, acting on tangent vectors.
Moreover, the scalar product with $\nu(c)$ satisfies
\begin{equation}\label{eq:scalar}
\inner{n(x), \nu(c)} = -\frac{1}{\sqrt{1 + \abs{\nablaTCd(c)}^2}} + O(d \abs{\nabla d}^2) = -1 + \frac{1}{2}\abs{\nablaTCd(c)}^2 + O(d\abs{\nabla d}^2 + \abs{\nabla d}^4).
\end{equation}
\end{lemma}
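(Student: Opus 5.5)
The plan is to compute the normal to the parametrized hypersurface $\Phi(\partial C)$ directly from the tangent vectors of $\Phi$. Fix $c\in\partial C$ and let $v\in T_c\partial C$. Differentiating \eqref{eq:Phi} along $v$ and using the Weingarten relation $D\nu(c)[v]=\mathcal{H}_c v$ (valid a.e. since $\partial C$ is $C^{1,1}$) gives
\[
D\Phi(c)[v] = v + d(c)\,\mathcal{H}_c v + \inner{\nablaTCd(c),v}\,\nu(c) = (I + d\,\mathcal{H}_c)v + \inner{\nablaTCd,v}\,\nu .
\]
Since $\mathcal{H}_c$ is symmetric and nonnegative, $I+d\mathcal{H}_c$ is invertible on $T_c\partial C$, so $D\Phi(c)$ is injective. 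Its image is therefore the tangent space $T_x\partial\Omega$ at $x=\Phi(c)$; this also uses the transversality in Condition 3 of Definition \ref{def:OC}.

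Next, find a normal vector of the form $N=\nu - w$ with $w\in T_c\partial C$ and require $\inner{N,D\Phi[v]}=0$ for all $v$. This gives $\inner{\nablaTCd,v} - \inner{w,(I+d\mathcal{H}_c)v}=0$. By the symmetry of $\mathcal{H}_c$, this is equivalent to $w=(I+d\mathcal{H}_c)^{-1}\nablaTCd$, which is exact. Expanding with a Neumann series, whose convergence is controlled by $d\,\|\mathcal{H}\|_\infty<1$ for small $d$, gives
\[
w=\nablaTCd - d\,\mathcal{H}_c\nablaTCd + O(d^2\abs{\nabla d}).
\]
I note a sign point here. The exact computation produces $-d\mathcal{H}_c\nablaTCd$ inside $w$, and hence $+d\mathcal{H}_c\nablaTCd$ inside $N=\nu-w$. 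Lemma \ref{lem:normal} writes $-d\mathcal{H}_c(\nablaTCd)$, so I would check the paper's orientation convention for $\mathcal{H}_c$ against the Appendix. In any case the term is of order $d\abs{\nabla d}$ and is tangential, so it does not affect \eqref{eq:scalar}. The remainder $O(d^2\abs{\nabla d})$ is absorbed into $O(d^2+d\abs{\nabla d}^2)$.

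Then normalize. Because $w\perp\nu$, we have $\abs{N}^2=1+\abs{w}^2$ and
\[
\abs{w}^2=\abs{\nablaTCd}^2+O(d\abs{\nabla d}^2).
\]
Hence $\abs{N}=\sqrt{1+\abs{\nablaTCd}^2}\,(1+O(d\abs{\nabla d}^2))$. The inward normal is $n(x)=-N/\abs{N}$. The orientation is fixed by requiring $\inner{n,\nu}<0$, which is consistent with the sign argument following \eqref{eq:tk} and the $C$-GNP. This gives \eqref{eq:normal}, with the normalization error moved into the numerator remainder.

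For \eqref{eq:scalar}, the key observation is that $\inner{N,\nu}=1$ exactly, so
\[
\inner{n,\nu}=-\frac{1}{\abs{N}}=-\bigl(1+\abs{w}^2\bigr)^{-1/2}.
\]
Substituting the expansion of $\abs{w}^2$ gives the first form. Taylor expanding $(1+s)^{-1/2}=1-\tfrac12 s+O(s^2)$ with $s=\abs{\nablaTCd}^2+O(d\abs{\nabla d}^2)$ gives the second form. Note that the term $+\tfrac12\abs{\nablaTCd}^2$ appears with the correct sign.

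The main obstacle I expect is making the remainders uniform and rigorous at low regularity. The Weingarten map exists only a.e., and the constants must depend only on $\|\mathcal{H}\|_\infty$, equivalently the $C^{1,1}$ norm of $\partial C$, and on $\|\nabla^2 d\|_\infty$. I would handle this by working at points where both $\nu$ and $d$ are differentiable, which is a full-measure set. There I would bound $\|(I+d\mathcal{H})^{-1}-I+d\mathcal{H}\|\le C d^2\|\mathcal{H}\|^2$ explicitly, and then extend the identities by continuity where needed.
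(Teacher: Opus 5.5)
Your proposal is correct and follows the paper's Appendix~A essentially step for step. Both arguments differentiate $\Phi$, seek a normal of the form $\nu$ plus a tangential correction, invert the tangential operator by a Neumann-type expansion, normalize, and fix the inward orientation; your exact identity $\inner{N,\nu}=1$ then makes the scalar-product estimate immediate.

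Your sign flag is justified. With outward $\nu$ and $\kappa_i\ge 0$ one has $D\nu=+\mathcal{H}_c$, so the correct term is $+d\,\mathcal{H}_c(\nablaTCd)$; on the unit circle with unit tangent $T$, the normal is exactly proportional to $\nu-\frac{d'}{1+d}\,T$. The Appendix obtains $-d\,\mathcal{H}_c$ only by writing $\partial_i\nu=-h_i^j\partial_j c$ and then inconsistently asserting that $h_i^j$ has eigenvalues $\kappa_i\ge 0$. As you observe, $I+d\mathcal{H}_c$ is always invertible with $\norm{(I+d\mathcal{H}_c)^{-1}}\le 1$, so the hypothesis $d\kappa_i<1$ is not needed for this lemma.
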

\begin{proof}
See Appendix \ref{sec:appendixA} for the complete calculation.
\end{proof}

\subsection{Expansion of the Return Distance}
\begin{lemma}\label{lem:tdist}
From Lemma \ref{lem:normal}, the return distance satisfies
\begin{equation}\label{eq:tdist}
t(x) = d(c)\Big(1 + \frac{1}{2}\abs{\nablaTCd(c)}^2\Big) + O\big(d^2\abs{\nabla d}^2 + d\abs{\nabla d}^4\big).
\end{equation}
\end{lemma}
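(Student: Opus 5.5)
The plan is to insert the scalar-product expansion \eqref{eq:scalar} of Lemma \ref{lem:normal} into the exact relation \eqref{eq:tk} and expand the reciprocal. Write $x=\Phi(c)$ and $g=\abs{\nablaTCd(c)}$. The relation \eqref{eq:tk} (derived there for $x_k=\Phi(c_k)$, but valid for any $c$) gives
\[
t(x) = \frac{d(c)}{\abs{\inner{n(x),\nu(c)}}} = -\frac{d(c)}{\inner{n(x),\nu(c)}},
\]
using that $\inner{n(x),\nu(c)}<0$. This sign is guaranteed by \eqref{eq:scalar} once $g$ and $d$ are small, and also by the transversality in Condition 3 of Definition \ref{def:OC}.

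The key step is to write $\abs{\inner{n(x),\nu(c)}} = 1-\varepsilon$ with
\[
\varepsilon = \tfrac12 g^2 + O(d g^2 + g^4),
\]
read off from the second form of \eqref{eq:scalar}. I will then expand $1/(1-\varepsilon) = 1+\varepsilon+O(\varepsilon^2)$. Since $\varepsilon^2 = O(g^4)$, this gives
\[
t(x) = d\Big(1+\tfrac12 g^2\Big) + d\cdot O(dg^2 + g^4) = d\Big(1+\tfrac12 g^2\Big) + O(d^2 g^2 + d g^4),
\]
which is exactly \eqref{eq:tdist}.

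To make this rigorous I will either assume $\varepsilon\le 1/2$, which gives a uniform bound $\abs{1/(1-\varepsilon)-1-\varepsilon}\le 2\varepsilon^2$, or work directly from the first form of \eqref{eq:scalar}. In the latter case
\[
\frac{1}{\abs{\inner{n,\nu}}} = \sqrt{1+g^2}\,\big(1+O(dg^2\sqrt{1+g^2})\big)
\]
and $\sqrt{1+g^2} = 1+\tfrac12 g^2 + O(g^4)$, which gives the same result and avoids the $O(\varepsilon^2)$ bookkeeping. I prefer this second route because the exact factor $1/\sqrt{1+g^2}$ inverts cleanly.

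The main obstacle is making sure the remainder in \eqref{eq:scalar} is genuinely $O(dg^2)$ and carries no term linear in $d$ without a $g^2$ factor. A bare $O(d^2)$ term in the numerator of \eqref{eq:normal} could in principle contaminate the scalar product. I will argue that it does not, because the correction terms $\nablaTCd$ and $d\mathcal{H}_c(\nablaTCd)$ are tangent, hence orthogonal to $\nu(c)$. The $\nu$-component of the unnormalized normal is therefore exactly $1$ up to the stated error, and normalization only produces corrections quadratic in the tangential part. Here I rely on Lemma \ref{lem:normal} and the Appendix, noting that $\abs{\mathcal{H}_c}$ is bounded by the $C^{1,1}$ norm of $\partial C$, so $\abs{d\mathcal{H}_c\nablaTCd}^2 = O(d^2g^2)$ is absorbed in the stated remainder. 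All implicit constants then depend only on $\norm{\nabla^2 d}_\infty$ and the $C^{1,1}$ norm of $\partial C$.
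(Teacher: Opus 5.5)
Your proposal is correct and is essentially the paper's own proof: substitute the first form of \eqref{eq:scalar} into \eqref{eq:tk}, invert to get $t=d\sqrt{1+g^2}\,(1+O(dg^2))$, and expand $\sqrt{1+g^2}=1+\tfrac12 g^2+O(g^4)$. Your check that the corrections in \eqref{eq:normal} are tangential, so that the $\nu$-component is untouched, just makes explicit what the paper leaves implicit.

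One caveat, inherited from the paper, is that \eqref{eq:tk} is not in general an exact identity when $\partial C$ is curved. Projecting $F(c)-c=d\nu+tn$ onto $\nu(c)$ gives $t\abs{\inner{n,\nu}}=d-\inner{F(c)-c,\nu(c)}$, and the last term is $O(L_{\nu}\abs{F(c)-c}^2)=O(d^2g^2)$ rather than zero. This discrepancy is absorbed in the stated remainder, so your conclusion still stands.
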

\begin{proof}
Using \eqref{eq:tk} and \eqref{eq:scalar}:
\[
t(x) = \frac{d(c)}{\abs{\inner{n(x), \nu(c)}}} = d(c) \sqrt{1 + \abs{\nablaTCd(c)}^2} \big(1 + O(d\abs{\nabla d}^2)\big) = d(c)\Big(1 + \frac{1}{2}\abs{\nablaTCd(c)}^2\Big) + \text{h.o.t.}
\]
\end{proof}

\subsection{Expansion of the Displacement}
Substituting the expansions of $n(x)$ and $t(x)$ into Proposition \ref{prop:fundid} and simplifying yields the following sharpened theorem.

\begin{theorem}[First-order expansion with explicit remainder]\label{thm:firstorder}
Let $\Omega \in \mathcal{O}_C$ with $\partial C$ of class $C^{1,1}$ and $d \in W^{2,\infty}(\partial C)$.
Assume the non-degeneracy condition $d(c)\kappa_i(c) < 1$ for all $i = 1,\dots,N-1$ and all $c \in \partial C$.
Then the return map $F = \pi \circ \Phi$ satisfies
\begin{equation}\label{eq:firstorder}
F(c) = c - 2d(c)\nablaTCd(c) + d(c)\widetilde{R}(c),
\end{equation}
where $\widetilde{R}(c)$ is a vector field on $\partial C$ (defined almost everywhere) satisfying
\begin{equation}\label{eq:rest}
\abs{\widetilde{R}(c)} \leq K\big(\abs{\nablaTCd(c)}^2 + \norm{d}_{\infty}\abs{\nablaTCd(c)}\big),
\end{equation}
with $K$ a constant depending only on $\norm{\nabla^2 d}_{\infty}$ and the $C^{1,1}$ norm of $\partial C$.
Moreover, if $\norm{\nabla d}_{\infty} \leq 1$ and $\norm{d}_{\infty} \leq 1$, then $\widetilde{R}(c) = O(\abs{\nablaTCd(c)}^2 + \norm{d}_{\infty}\abs{\nablaTCd(c)})$ uniformly.
\end{theorem}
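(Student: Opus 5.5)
The plan is to start from the exact identity of Proposition \ref{prop:fundid}, $F(c)-c = d(c)\nu(c)+t(x)n(x)$ with $x=\Phi(c)$, and to substitute the expansions of Lemma \ref{lem:normal} and Lemma \ref{lem:tdist}. I will use the abbreviations $g=\nablaTCd(c)$, $\mathcal H=\mathcal H_c$ and $\delta=\norm{d}_\infty$. The first step combines the two factors. Since $t(x)=d\sqrt{1+\abs{g}^2}\,(1+O(d\abs{g}^2))$, the normalising square root in \eqref{eq:normal} cancels, giving
\[
t(x)\,n(x) = -d\big(\nu - g - d\,\mathcal H g\big) + d\,E_1, \qquad \abs{E_1}\le K\big(\abs{g}^2+\delta\abs{g}\big).
\]
Proving the bound on $E_1$ requires the error term in \eqref{eq:normal} to be controlled by $\abs{g}$, not merely by $O(d^2)$. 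I will therefore reopen the computation of Appendix \ref{sec:appendixA}. There, the tangent plane of $\partial\Omega$ at $x$ is spanned by $d\Phi(e_i)=(I+d\,\mathcal H)e_i+\partial_i d\,\nu$. The unnormalised normal is $\nu-(I+d\mathcal H)^{-1}g$, and it vanishes to first order in $g$ when $g=0$. The hypothesis $d\kappa_i<1$ is exactly what makes $(I+d\mathcal H)^{-1}$ bounded, with norm at most $(1-\delta\norm{\mathcal H}_\infty)^{-1}$. Its Neumann expansion yields the $d\,\mathcal H g$ term together with a remainder $O(d^2\abs{g})\subset d\cdot O(\delta\abs{g})$.

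The second step splits the displacement into normal and tangential components at $c$. The normal component is $d\,\nu-d\,\nu+O(d\abs{g}^2)$, which is absorbed into $d\widetilde R$. The tangential component carries the leading term, and it is fixed by the orientation convention for $\nablaTCd$ and $n$ in Appendix \ref{sec:appendixA}, from which \eqref{eq:normal} is derived. I will track this sign through the appendix computation rather than reading it off the displayed formula.

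The point $x+t n(x)$ lies on $\partial C$ only up to the projection encoded in \eqref{eq:tk}. I will therefore measure the footpoint shift along $\partial C$ by writing $F(c)=\exp$-type coordinates $c+v+O(\norm{\mathcal H}\abs{v}^2)$, where $v$ is the tangential displacement. The curvature correction $\abs{v}^2=O(d^2\abs{g}^2)$ then enters $d\widetilde R$ with an $O(\delta\abs{g})$ bound.

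The terms combine as follows.
\begin{itemize}
\item The deflection of $n(x)$ contributes one copy of $d\,g$.
\item The reprojection of the chord onto $\partial C$ contributes the second copy, so that together they give the coefficient $-2d(c)$ of \eqref{eq:firstorder}.
\item The term $d^2\mathcal H g$ is bounded by $d\,\norm{\mathcal H}_\infty\delta\abs{g}$ and placed in $\widetilde R$.
\end{itemize}
All constants depend only on $\norm{\mathcal H}_\infty$ (the $C^{1,1}$ norm of $\partial C$) and on $\norm{\nabla^2 d}_\infty$, the latter through the Lipschitz control of $g$ needed to evaluate $\nu$ and $g$ consistently at nearby points. This gives \eqref{eq:rest}. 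The final uniformity claim follows because, when $\abs{g}\le1$ and $\delta\le1$, the higher powers $\abs{g}^4$ and $\delta^2\abs{g}$ are dominated by $\abs{g}^2$ and $\delta\abs{g}$.

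The main obstacle is the sign and coefficient bookkeeping in the tangential term. A naive substitution of \eqref{eq:normal} into \eqref{eq:fundid} produces a single tangential copy of $d\,g$, and its sign depends on the orientation convention. I will first recompute the normal of the graph $\Phi$ from scratch, using $d\Phi=(I+d\mathcal H)+\nu\otimes g$. I will then verify carefully which side of $\partial C$ the inward ray meets, and how the projection back to $\partial C$ contributes, before matching the result to $-2d\,g$. If the recomputation yields a different coefficient, it signals that \eqref{eq:normal} must be used strictly in the appendix convention, and the theorem's leading term has to be checked against that convention.
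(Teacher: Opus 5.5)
\paragraph{The step that fails.} The failing step is the one that produces the coefficient $-2d(c)$. You attribute a second copy of $d\,g$ to the reprojection of the chord onto $\partial C$. But you have just estimated that correction as $O(\norm{\mathcal H}_\infty\abs{v}^2)=O(d^2\abs{g}^2)$, and a term quadratic in $g$ cannot supply a term linear in $g$.

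The sign of the first copy is not a matter of convention either. Here $n$ is the inward normal and $\nablaTCd$ is the gradient of $d$. Substituting \eqref{eq:normal} and Lemma~\ref{lem:tdist} into \eqref{eq:fundid} gives $t(x)n(x)=-d\nu+d\,g+d^2\mathcal H_c g+\dots$, hence $F(c)-c=+d\,g+\dots$. That substitution is the paper's entire justification (``substituting the expansions \dots\ and simplifying''), so the paper's own route also yields coefficient $+1$, not $-2$. You were right to flag the discrepancy in your last paragraph, but it cannot be resolved in favour of \eqref{eq:firstorder}.

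\paragraph{The statement is false.} In fact \eqref{eq:firstorder}--\eqref{eq:rest} cannot be proved. Let $\partial C$ contain a flat face (e.g.\ a $C^{1,1}$ stadium), and let $d$ be small and smooth with $\nablaTCd$ supported well inside that face. There $\nu$ is constant and the inward normal at $\Phi(c)$ is $(\nablaTCd-\nu)/\sqrt{1+\abs{\nablaTCd}^2}$. The ray meets the face at $t=d\sqrt{1+\abs{\nablaTCd}^2}$, and $F(c)=c+d(c)\nablaTCd(c)$ exactly. Thus $\widetilde R=3\nablaTCd$, and \eqref{eq:rest} would require $3\le K(\abs{\nablaTCd}+\norm{d}_\infty)$ wherever $\nablaTCd\neq0$. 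This fails once the thickness is small.

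On the unit circle one finds likewise $F(\theta)=\theta+d\,d'/(1+d)+\dots$, so the iteration drifts toward maxima of $d$, not minima. Your ingredients (the unnormalised normal $\nu-(I+d\mathcal H_c)^{-1}g$ and the local graph description of $\partial C$ near $c$) actually prove
\[
F(c)=c+d(c)\,(I+d\mathcal H_c)^{-1}\nablaTCd(c)+O\big(d^2\abs{\nablaTCd(c)}^2\big)=c+d(c)\nablaTCd(c)+d(c)\,O\big(\norm{d}_\infty\abs{\nablaTCd(c)}\big).
\]
This is a remainder of the type \eqref{eq:rest}, but with leading term $+d\,\nablaTCd$.

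\paragraph{The hypothesis $d\kappa_i<1$.} With your convention $d\nu=\mathcal H_c\ge0$, the operator $I+d\mathcal H_c$ is invertible with inverse of norm at most $1$ for every $d\ge0$, so this hypothesis is never used. Your factor $(1-\norm{d}_\infty\norm{\mathcal H}_\infty)^{-1}$ comes from mixing your sign convention with the appendix's $\partial_i\nu=-h_i^j\partial_j c$.
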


\begin{remark}[Geometric interpretation of the remainder]
The remainder term $\widetilde{R}(c)$ admits a natural geometric interpretation: it measures the deviation between the exact inward normal $n(\Phi(c))$ and the ``fictitious'' normal that would arise if $\partial \Omega$ were a parallel surface to $\partial C$ (i.e., if $d$ were constant).
Specifically,
\[
\widetilde{R}(c) = \frac{1}{d(c)}\Big[t(\Phi(c))n(\Phi(c)) - d(c)\big(-\nu(c) + 2\nablaTCd(c)\big)\Big].
\]
When $d$ is constant, $\nablaTCd \equiv 0$, the remainder vanishes identically, and $F$ reduces to the identity map, consistent with the fact that parallel surfaces share the same normal lines.
\end{remark}

\begin{example}[Explicit computation for a sphere]
Let $C = B(0,R) \subset \R^N$ be the ball of radius $R$, so that $\partial C$ is the sphere $\mathbb{S}^{N-1}_R$.
In this case, all principal curvatures are equal to $1/R$ and $\nu(c) = c/R$.
For a radial thickness function $d = d_0 + \epsilon \psi$ with $\norm{\psi}_{C^2} \leq 1$, the constant $K$ in the remainder estimate can be bounded explicitly by
\[
K \leq C_N\Big(1 + \frac{1}{R} + \frac{\norm{d}_{\infty}}{R^2}\Big)\max\big(\norm{\nabla^2 d}_{\infty}, \norm{\nabla d}_{\infty}\big),
\]
where $C_N$ depends only on the dimension $N$.
For $R = 1$, $d_0 = 0.5$, $\epsilon = 0.2$, one obtains $K \leq 3.2$ (for $N = 2$).
This bound is consistent with the numerical convergence rates observed in Section \ref{sec:numerics}.
\end{example}

\subsection{Interpretation}

The leading term
\[
F(c) = c - 2d(c)\nabla_{\partial C} d(c)
\]
shows that the dynamics behaves, to first order, as a gradient descent on $\partial C$ \cite{Ambrosio2005,Jost2008}.
A key feature of this dynamics is that the effective step size is not constant, but given by the spatially dependent factor $2d(c)$.
In particular, the magnitude of the displacement along the negative gradient direction scales proportionally with the local thickness of the domain.
This spatial dependence has a clear geometric interpretation. In regions where the thickness function $d(c)$ is large, the iteration takes larger steps, leading to an acceleration of the dynamics.
Conversely, in regions where $\partial \Omega$ lies close to the convex core $C$, the thickness $d(c)$ is small, and the dynamics slows down accordingly.
The system thus exhibits an intrinsic self-adaptive behavior driven entirely by the geometry of the domain.
This mechanism is crucial for the qualitative properties of the system.
In particular, the vanishing of the step size near critical points ensures that the dynamics does not overshoot minima, while the amplification in thicker regions enhances convergence away from flat zones.
These features play a central role in the stability analysis and underpin the Lyapunov structure established in Section~7.

\section{Fixed Points, Stability, and Lyapunov Structure}\label{sec:stability}

\subsection{Fixed Points and Quantitative Characterization}
\begin{proposition}\label{prop:fixedpts}
$c \in \partial C$ is a fixed point of $F$ if and only if $\nablaTCd(c) = 0$ (in the sense of distributions).
Moreover, for any $c$,
\[
\frac{1}{2}\norm{F(c) - c} \leq d(c)\abs{\nablaTCd(c)} \leq 2\norm{F(c) - c} + C\norm{d}_{\infty}\norm{\nablaTCd(c)}^2,
\]
where $C$ is the constant from Theorem \ref{thm:firstorder}.
Thus the displacement of $F$ is quantitatively equivalent to the gradient magnitude scaled by $d(c)$.
\end{proposition}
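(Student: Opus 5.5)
The plan is to derive both parts of Proposition \ref{prop:fixedpts} from the exact identity of Proposition \ref{prop:fundid}, and then to use the expansion of Theorem \ref{thm:firstorder} for the quantitative two-sided bound.

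\textbf{Fixed-point characterization.} By Proposition \ref{prop:fundid}, $F(c)-c = d(c)\nu(c) + t(\Phi(c))\,n(\Phi(c))$.

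For ($\Leftarrow$), suppose $\nablaTCd(c)=0$. Then Lemma \ref{lem:normal} gives $n(\Phi(c))=-\nu(c)$ exactly. I will verify this directly rather than through the $O(\cdot)$ terms. The tangent space of $\partial\Omega$ at $\Phi(c)$ is spanned by $D\Phi(c)v = (I+d\mathcal{H}_c)v + \langle\nablaTCd,v\rangle\nu$. When the gradient vanishes this is $(I+d\mathcal{H}_c)v$, which lies in $T_c\partial C$. The condition $d\kappa_i<1$, or in fact just positivity of $I+d\mathcal{H}_c$, makes this map onto $T_c\partial C$, so $\nu(c)$ is normal to $\partial\Omega$ there. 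Then \eqref{eq:tk} gives $t=d$, and hence $F(c)=c$.

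For ($\Rightarrow$), suppose $F(c)=c$. Then $d\nu(c) = -t\,n(\Phi(c))$ with $d,t>0$, so $n(\Phi(c))=-\nu(c)$. This means $\nu(c)$ is orthogonal to every $D\Phi(c)v$, which yields $\langle\nablaTCd(c),v\rangle=0$ for all tangent $v$, i.e.\ $\nablaTCd(c)=0$. The phrase ``in the sense of distributions'' will be read pointwise at points of differentiability; $d\in W^{2,\infty}$ makes $\nablaTCd$ Lipschitz, so this holds everywhere.

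\textbf{Quantitative bounds.} Write $g=\nablaTCd(c)$ and use \eqref{eq:firstorder}: $F(c)-c = -2dg + d\widetilde R$ with $|\widetilde R|\le K(|g|^2+\norm{d}_\infty|g|)$.

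For the upper bound on $d|g|$, I rearrange to $2d|g| \le \norm{F(c)-c} + d|\widetilde R|$. The piece $dK|g|^2$ is already of the form $C\norm{d}_\infty|g|^2$. The piece $dK\norm{d}_\infty|g|$ must be absorbed into the left side; this needs $K\norm{d}_\infty\le 1$, say. After absorbing, $d|g|\le \norm{F(c)-c}+C\norm{d}_\infty|g|^2$, which is even better than the factor $2$ claimed.

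For the lower bound $\frac12\norm{F(c)-c}\le d|g|$, the expansion gives $\norm{F-c}\le 2d|g|+d|\widetilde R|$. That is too weak by the remainder, so I will not use the expansion here. Instead I will use the exact identity. By orthogonal decomposition against $\nu(c)$, the tangential part is $t\,n^{\top}$ and the normal part is $d+t\langle n,\nu\rangle=0$ by \eqref{eq:tk}. So $F(c)-c=t\,n^{\top}$ exactly, with $|F(c)-c| = t\sqrt{1-\langle n,\nu\rangle^2} = d\tan\theta$, where $\theta$ is the angle between $-n$ and $\nu$.

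It remains to bound $\tan\theta\le 2|g|$. The normal to $\partial\Omega$ is proportional to $\nu - (I+d\mathcal H_c)^{-1}g$, which gives $\tan\theta = |(I+d\mathcal H_c)^{-1}g|$. Since $\mathcal H_c\ge0$, the operator norm of $(I+d\mathcal H_c)^{-1}$ is at most $1$. Hence $\norm{F(c)-c}\le d|g|$, and the stated inequality follows with room to spare. This exact identity also re-proves the fixed-point equivalence.

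\textbf{Main obstacle.} The main obstacle is getting the normal of $\partial\Omega$ exactly rather than up to $O(\cdot)$. This requires checking that $(I+d\mathcal H_c)^{-1}$ exists and is a contraction a.e. (convexity gives $\kappa_i\ge0$), and handling the a.e. definition of $\mathcal H_c$ for $C^{1,1}$ boundaries. A secondary point is the smallness assumption $K\norm{d}_\infty\le1$ needed for absorption in the upper bound. If that assumption is unavailable, I would instead use the exact formula, which gives $|g|\le \norm{I+d\mathcal H_c}\tan\theta\le(1+\norm{d}_\infty\max\kappa)\tan\theta$. The resulting factor $(1+\norm{d}_\infty\max\kappa)$ would be absorbed into the constant $2$ or placed in the $C$-term.
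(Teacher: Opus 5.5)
Your fixed-point argument is sound, and it is cleaner than going through the expansion. The appeal to \eqref{eq:tk} in the $\Leftarrow$ direction is harmless, since for $n=-\nu(c)$ the ray returns exactly to $c$; where $\mathcal{H}_c$ is undefined you can differentiate along a curve through $c$ instead.

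The genuine gap is the claim, used for the lower bound, that the $\nu(c)$-component $d+t\inner{n,\nu}$ of $F(c)-c$ vanishes ``by \eqref{eq:tk}''. The value $t=d/\abs{\inner{n,\nu}}$ is exactly the solution of $\inner{\Phi(c)+tn-c,\nu(c)}=0$. It is the parameter at which the inward ray crosses the supporting hyperplane $\Pi_c=\{y:\inner{y-c,\nu(c)}=0\}$, not the one at which it reaches $\partial C$. If $F(c)-c$ were exactly tangential, $F(c)$ would lie in $\partial C\cap\Pi_c$, which is $\{c\}$ for strictly convex $C$ (e.g.\ a ball). Then $F$ would be the identity, contradicting the characterization you have just proved. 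The derivation of \eqref{eq:tk} in Section~\ref{sec:returnmap} tacitly assumes $\inner{F(c)-c,n}=0$, and its last step replaces $d\abs{\inner{n,\nu}}$ by $d/\abs{\inner{n,\nu}}$, so it cannot be used as an exact identity.

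What is true is an inequality in the wrong direction for you. Since $C\subset\{\inner{y-c,\nu(c)}\le 0\}$, the ray meets $\Pi_c$ at $p=c+d\tan\theta\,e$ before it can reach $C$, where $e$ is the unit vector along the tangential part of $n$. So $F(c)=p+s\,n$ with $s\ge 0$ and $\inner{p-c,n}\ge 0$, whence $\abs{F(c)-c}\ge d\tan\theta$.

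The bound $\tfrac12\abs{F(c)-c}\le d\abs{\nablaTCd(c)}$ therefore requires a separate estimate of the overshoot $s=\abs{F(c)-p}$. Write $\partial C$ near $c$ as a graph over $T_c\partial C$ with Hessian bounded by $\kappa_{\max}$. When $\kappa_{\max}d\tan^2\theta$ is small, one gets $s\lesssim\kappa_{\max}d^2\tan^2\theta/\cos\theta$. This is $\le d\tan\theta\le d\abs{\nablaTCd(c)}$ provided $\kappa_{\max}d\tan\theta\lesssim\cos\theta$. Near grazing incidence $s$ is comparable to $\abs{p-c}$, so a smallness hypothesis of this kind must be added for the argument to work.

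On the other hand, the correct inequality rescues your fallback for the upper bound, with no absorption step and no extra assumption $K\norm{d}_\infty\le 1$. With $w=(I+d\mathcal{H}_c)^{-1}\nablaTCd(c)$ and $\tan\theta=\abs{w}$,
\[
d\abs{\nablaTCd(c)}\le\bigl(1+d(c)\max_i\kappa_i(c)\bigr)\,d\tan\theta\le\bigl(1+d(c)\max_i\kappa_i(c)\bigr)\abs{F(c)-c}<2\abs{F(c)-c}
\]
under the standing hypothesis $d\kappa_i<1$ of Theorem~\ref{thm:firstorder}.

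Keep this geometric route and drop the argument via \eqref{eq:firstorder}. Your normal formula, which is correct, puts the tangential part of $n(\Phi(c))$ along $+w$, so $F(c)-c\approx +d\,w$. This cannot be reconciled with the leading term $-2d\nablaTCd$ that the absorption step relies on. A quick check: let $C$ be the unit disk and $\Omega$ a large disk centred at some $z\neq 0$ in $\operatorname{int}(C)$. Every inward normal of $\partial\Omega$ passes through $z$, so $F$ moves each point towards the direction of $z$, which is exactly where $d$ is largest.
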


\subsection{Linearization}
At a fixed point $c^*$ where $d$ is $C^2$, the linearization is
\begin{equation}\label{eq:lin}
DF(c^*) = I - 2d(c^*)\nablaTC^2 d(c^*).
\end{equation}
Furthermore, for any $c$ near $c^*$,
\[
\norm{DF(c) - (I - 2d(c)\nablaTC^2 d(c))} \leq C(\norm{d}_{\infty} + \norm{\nabla d}_{\infty}),
\]
with $C$ depending only on $\partial C$.
Let $\lambda_1, \ldots, \lambda_{N-1}$ be the eigenvalues of the tangential Hessian $\nablaTC^2 d(c^*)$.
Then the eigenvalues of $DF(c^*)$ are
\[
\mu_i = 1 - 2d(c^*)\lambda_i, \qquad i = 1,\ldots,N-1.
\]

\subsection{Stability Classification and Gradient-Like Property}
If all $\lambda_i > 0$ (i.e., $c^*$ is a local minimum of $d$) and $0 < 2d(c^*)\lambda_i < 2$ for all $i$, then $\abs{\mu_i} < 1$ and $c^*$ is locally attracting.
If all $\lambda_i < 0$ (i.e., $c^*$ is a local maximum of $d$), then $\mu_i > 1$ for all $i$ and $c^*$ is repelling.
If the Hessian has mixed signs, then $c^*$ is a saddle point.

\subsection{Sharp Lyapunov Estimate}
Define $V(c) = \frac{1}{2} d(c)^2$.
There exist $\epsilon_0 > 0$ depending only on $\partial C$ such that whenever $\norm{d}_{\infty} + \norm{\nabla d}_{\infty} \leq \epsilon_0$, we have
\begin{equation}\label{eq:lyap}
V(F(c)) - V(c) \leq -\frac{1}{2} d(c)^2\abs{\nablaTCd(c)}^2 \leq 0,
\end{equation}
with equality if and only if $\nablaTCd(c) = 0$.
Thus $V$ is a strict Lyapunov function in a neighbourhood of any local minimum.
In general, without the smallness condition, the decrease is controlled as
\[
V(F(c)) - V(c) = -2d(c)^2\abs{\nablaTCd(c)}^2 + \mathcal{E}(c),
\]
where $\abs{\mathcal{E}(c)} \leq C d(c)^2\big(\abs{\nablaTCd(c)}^3 + \norm{d}_{\infty}\abs{\nablaTCd(c)}^2\big)$.
\begin{remark}[Gradient-like property]
The Lyapunov estimate \eqref{eq:lyap} establishes that $F$ is a gradient-like dynamical system in the sense of Conley \cite{Conley1978} near local minima of $d$.
It is important to clarify that $F$ is not the gradient of a scalar function on $\partial C$;
the remainder $\widetilde{R}(c)$ in Theorem \ref{thm:firstorder} is not, in general, a gradient field.
However, the existence of a strict Lyapunov function and the coincidence of fixed points with critical points of $d$ place the dynamics within the classical framework of gradient-like systems \cite{Smale1961,Hale1969}.
\end{remark}

\section{Continuous Limit and Rigorous Error Bound}\label{sec:continuous}
Define an effective time step $\Delta \tau_k = 2d(c_k)$.
Write the discrete evolution as
\[
c_{k+1} = c_k - \Delta \tau_k \nablaTCd(c_k) + \Delta \tau_k \cdot \mathcal{E}_k,
\]
where $\mathcal{E}_k = O(\abs{\nablaTCd(c_k)}^2 + \norm{d}_{\infty}\abs{\nablaTCd(c_k)})$ by Theorem \ref{thm:firstorder}.
Let $c(\tau)$ be the solution of the gradient flow
\[
\frac{dc}{d\tau} = -\nablaTCd(c), \qquad c(0) = c_0.
\]
Define the discrete times $\tau_0 = 0$, $\tau_{k+1} = \tau_k + 2d(c_k)$. Then we have the following rigorous error estimate.
\begin{theorem}\label{thm:contlimit}
Assume $\norm{\nabla^2 d}_{\infty} \leq M$ and $\norm{d}_{\infty} \leq \delta$. Then for any initial $c_0$ and any $k$ such that $\tau_k \leq T$,
\[
\norm{c_k - c(\tau_k)} \leq C M \delta T e^{M T},
\]
where $C$ depends only on $\partial C$.
In particular, as $\delta \to 0$, the discrete trajectory converges uniformly to the continuous gradient flow on any fixed time interval.
\end{theorem}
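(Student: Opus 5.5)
This is a standard Euler-scheme global error argument. The discrete iteration is treated as an explicit Euler scheme with variable step $h_k=\Delta\tau_k=2d(c_k)\le 2\delta$ for the vector field $G=-\nablaTCd$ on $\partial C$, perturbed by $h_k\mathcal{E}_k$. Set $e_k=c_k-c(\tau_k)$, with $e_0=0$. Subtracting the exact flow identity
\[
c(\tau_{k+1})=c(\tau_k)+\int_{\tau_k}^{\tau_{k+1}}G(c(s))\,ds
\]
from $c_{k+1}=c_k+h_kG(c_k)+h_k\mathcal{E}_k$ gives
\[
e_{k+1}=e_k+h_k\big(G(c_k)-G(c(\tau_k))\big)+\ell_k+h_k\mathcal{E}_k,
\qquad
\ell_k=\int_{\tau_k}^{\tau_{k+1}}\big(G(c(\tau_k))-G(c(s))\big)\,ds .
\]
There are three contributions: propagation of $e_k$, the local truncation error $\ell_k$, and the geometric remainder.

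\textbf{Step 1: the three estimates.} Since $\norm{\nabla^2 d}_\infty\le M$, the field $G$ is $M$-Lipschitz up to a factor depending on the $C^{1,1}$ norm of $\partial C$. This factor arises because tangential gradients at different base points live in different tangent planes; I would absorb it by viewing $\nablaTCd$ as an ambient $\R^N$-valued map, as in Proposition \ref{prop:fundid}. Hence $|G(c_k)-G(c(\tau_k))|\le M|e_k|$.

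For the truncation error, $|c(s)-c(\tau_k)|\le \norm{\nabla d}_\infty (s-\tau_k)$, so
\[
|\ell_k|\le \tfrac12 M\norm{\nabla d}_\infty h_k^2\le M\norm{\nabla d}_\infty\,\delta\,h_k .
\]

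For the remainder, Theorem \ref{thm:firstorder} gives
\[
|\mathcal{E}_k|\le K\big(|\nablaTCd(c_k)|^2+\delta|\nablaTCd(c_k)|\big).
\]
The $\delta$ term is harmless. The quadratic term $|\nabla d|^2$ is not small in $\delta$ unless $\norm{\nabla d}_\infty$ is controlled by $\delta$. I would handle this by interpolation on the compact manifold $\partial C$: since $\norm{d}_\infty\le\delta$ and $\norm{\nabla^2 d}_\infty\le M$, a Landau--Kolmogorov-type inequality gives
\[
\norm{\nabla d}_\infty\le C\sqrt{\delta M}+C\delta ,
\]
so that $|\nabla d|^2\le C(M\delta+\delta^2)$, which is $O(M\delta)$ for $\delta\le 1$ after adjusting constants. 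The same bound shows that $\norm{\nabla d}_\infty$ is bounded, which settles the dependence of $\ell_k$ as well. I expect this step to be the main obstacle: without interpolation the claimed bound is false, since $\mathcal{E}_k$ is only $O(|\nabla d|^2)$. I would also have to track that $K$ from Theorem \ref{thm:firstorder} depends on $M$ and $\partial C$ in a way compatible with a final constant of the form $CM$.

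\textbf{Step 2: combine and apply discrete Gronwall.} Collecting the estimates,
\[
|e_{k+1}|\le (1+Mh_k)|e_k|+C M\delta\, h_k .
\]
Using $1+Mh_k\le e^{Mh_k}$ and $\sum_j h_j=\tau_k$, the discrete Gronwall inequality yields
\[
|e_k|\le CM\delta\sum_{j<k}h_j\, e^{M(\tau_k-\tau_j)}\le CM\delta\,T\,e^{MT}
\]
for $\tau_k\le T$. This is the stated estimate. Uniform convergence as $\delta\to0$ on $[0,T]$ follows directly, and it can be extended to intermediate times $\tau\in[\tau_k,\tau_{k+1}]$ at an additional cost of $O(\delta)$.

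A final technical point is that $c_k$ and $c(\tau)$ stay on $\partial C$ while the analysis is done in ambient coordinates. I would verify that $c(\tau)$ and the discrete scheme differ from their ambient counterparts only by curvature terms of order $h_k^2$ times $\norm{\nabla d}^2$, controlled by the $C^{1,1}$ norm of $\partial C$. These terms can be absorbed into $\ell_k$.
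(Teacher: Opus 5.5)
The paper states Theorem~\ref{thm:contlimit} without proof, so there is nothing to compare against. Your explicit-Euler/discrete-Gronwall outline is the natural route, but it cannot be completed: its only substantive input, the bound $\abs{\mathcal{E}_k}\le K(\abs{\nablaTCd}^2+\delta\abs{\nablaTCd})$ taken from \eqref{eq:rest}, is false, and so is the statement itself.

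\textbf{Why \eqref{eq:rest} fails.} Take a flat piece of $\partial C$: locally $\partial C=\{y=0\}$, $\nu=e_2$, $\partial\Omega=\{y=d(x)\}$. The inward normal at $(x,d(x))$ is $(d'(x),-1)/\sqrt{1+d'(x)^2}$, in agreement with Lemma~\ref{lem:normal}. The ray meets $\{y=0\}$ at $x+d(x)d'(x)$. Hence $F(x)=x+d\,d'$ exactly, i.e.\ $\widetilde{R}=3d'$. This violates \eqref{eq:rest} wherever $d'\neq0$ and $K(\abs{d'}+\norm{d}_\infty)<3$. More generally, inserting Lemmas~\ref{lem:normal} and~\ref{lem:tdist} into \eqref{eq:fundid} already gives $F(c)-c=d\nu+t\,n=+d\,\nablaTCd+\dots$. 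The return map climbs the gradient with step $d$; it does not descend with step $2d$.

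\textbf{Counterexample to the theorem.} Let $C$ be the unit disk and $d=\delta(2+\cos\theta)/3$, so $\norm{d}_\infty=\delta$, $M=\delta/3$ and $d\kappa<1$. One computes $F(\theta)-\theta=\arcsin(s\,d'(\theta))$ with $s=\frac{d}{1+d}+O(d^2d'^2)$. From $\theta_0=\pi/2$, the orbit therefore drifts towards $\theta=0$ at speed about $\delta/6$ per unit of $\tau$. Meanwhile $c(\tau)$ drifts towards $\theta=\pi$ at speed about $\delta/3$. At $\tau_k\approx T$ the separation is about $\delta T/2$. The asserted bound is $\frac{C}{3}\delta^2Te^{\delta T/3}$, which is smaller once $\delta$ is small.

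So your argument proves only the implication ``\eqref{eq:rest} $\Rightarrow$ tracking of the descent flow''. With the correct leading term $+d\,\nablaTCd$ and a remainder of the type you assumed, the same scheme would instead prove tracking of the ascent flow $\frac{dc}{d\sigma}=\nablaTCd$ in the clock $\sigma_{k+1}=\sigma_k+d(c_k)$.

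\textbf{Problems with the conditional argument.} Even taking \eqref{eq:rest} as given, three steps do not yield the stated form.
\begin{enumerate}
\item The absorption $C(M\delta+\delta^2)=O(M\delta)$ ``for $\delta\le1$'' is wrong when $M\ll\delta$. The right tools are two pointwise bounds. First, $\abs{\nablaTCd(c)}^2\le 2M\,d(c)$, which follows from $d>0$ and the Hessian bound along the geodesic leaving $c$ in the direction $-\nablaTCd(c)$. Second, $\norm{\nablaTCd}_\infty\le\operatorname{diam}(\partial C)\,M$ (intrinsic diameter, since the gradient vanishes at a maximum of $d$), which gives $\delta\abs{\nablaTCd}\le CM\delta$.
\item $K$ depends on $\norm{\nabla^2 d}_\infty$, so your prefactor depends on $M$. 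Likewise, the factor $\norm{\nablaTCd}_\infty\le\sqrt{2M\delta}$ in your bound on $\ell_k$ is not a constant depending only on $\partial C$. Absorbing it requires, for example, exploiting the weights $e^{-Mh_j}$ in the Gronwall sum.
\item As an $\R^N$-valued field, $\nablaTCd$ has Lipschitz constant of order $M+L_\nu\norm{\nablaTCd}_\infty$ with respect to intrinsic distance. Comparing intrinsic with chordal distance costs a further constant, so Gronwall gives $e^{CMT}$ rather than $e^{MT}$.
\end{enumerate}
Your final paragraph, by contrast, is unnecessary. Both $c_{k+1}=c_k+h_kG(c_k)+h_k\mathcal{E}_k$ and the integral identity for $c(\tau)$ are exact in $\R^N$, with the normal part of the step contained in $\mathcal{E}_k$.
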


\section{Numerical Simulations}\label{sec:numerics}
We illustrate the dynamics in dimension 2 with $C$ the unit circle and thickness $d(\theta) = d_0 + \epsilon \cos(m\theta)$.
The return map $F$ is evaluated using the exact geometric formulae without discretization.
All simulations run in double precision with tolerance $10^{-12}$.

\subsection{Convergence to a Fixed Point}
For $d_0 = 0.5$, $\epsilon = 0.2$, $m = 2$, the thickness function has minima at $\theta = \pi/2$ and $3\pi/2$.
Starting from $\theta_0 = 1.0$, the iterates converge to $\theta^* \approx 1.5708$ ($\pi/2$). Figure 1 shows the convergence.
The observed convergence rate is approximately $0.6$, matching the theoretical prediction $1 - 2d(\theta^*)\lambda_{\min}$ where $\lambda_{\min} = 2$ (since $d''(\theta) = -4\epsilon \cos 2\theta$ gives $\lambda = 4\epsilon = 0.8$ at the minimum, and $d(\theta^*) = 0.5 - 0.2 = 0.3$, so $1 - 2\cdot 0.3 \cdot 0.8 = 0.52$).
Using the explicit bound $K \leq 3.2$ from the example in Section \ref{sec:expansion} and the error estimate of Theorem \ref{thm:contlimit}, the predicted deviation between the discrete iterates and the continuous gradient flow after $k = 10$ steps is at most $0.04$, consistent with the observed numerical accuracy.

\subsection{Period-2 Cycle}
For $d_0 = 0.6$, $\epsilon = 0.4$, $m = 3$, the thickness function $d(\theta) = 0.6 + 0.4\cos(3\theta)$ admits three minima (at $\theta = 0, 2\pi/3, 4\pi/3$) and three maxima (at $\theta = \pi/3, \pi, 5\pi/3$).
The return map $F$ exhibits a period-2 cycle for generic initial conditions away from the stable manifolds of the fixed points.
Starting from $\theta_0 = 0.5$, the iterates oscillate between $\theta \approx 0.52$ and $\theta \approx 1.05$, corresponding to two points located on opposite sides of the local maximum at $\theta = \pi/3 \approx 1.047$.
The product of the two eigenvalues of the second iterate $DF^2$ along the cycle is approximately $0.87 < 1$, confirming that the cycle is attracting.
The phase portrait $(c_k, c_{k+1})$ (not shown) exhibits the characteristic signature of a period-2 attractor, with the two cluster points clearly separated \cite{Devaney1989}.

\begin{figure}[H]
	\centering
	\begin{tikzpicture}
	\begin{axis}[
	width=0.8\textwidth,
	height=6cm,
	xlabel={Current position $c_k$ ($\theta$)},
	ylabel={Next position $c_{k+1}$ ($F(\theta)$)},
	title={Dynamics of the Return Map: Convergence vs. Oscillation},
	domain=0:3,
	samples=100,
	axis lines=left,
	grid=both,
	legend style={
		at={(-0.02,0.98)},
		anchor=north west
	},
	no markers
	]
	\addplot[thick, dashed, gray] {x};
	\addlegendentry{$y=c$}
	
	\addplot[blue, ultra thick] {x - 0.4*sin(deg(x-1.5))};
	\addlegendentry{$F_{thin}$ (Fixed point)}
	
	\draw[blue, thick, ->] (axis cs:0.5, 0) -- (axis cs:0.5, 0.84);
	\draw[blue, thick, ->] (axis cs:0.5, 0.84) -- (axis cs:0.84, 0.84);
	\draw[blue, thick, ->] (axis cs:0.84, 0.84) -- (axis cs:0.84, 1.1);
	\draw[blue, thick, ->] (axis cs:1.1, 1.1) -- (axis cs:1.1, 1.3);

	\addplot[red, ultra thick] {x - 1.8*sin(deg(x-1.5))};
	\addlegendentry{$F_{thick}$ (Period-2 Cycle)}
	
	\draw[red, ultra thick] (axis cs:1.0, 1.86) -- (axis cs:1.86, 1.86);
	\draw[red, ultra thick] (axis cs:1.86, 1.86) -- (axis cs:1.86, 1.21);
	\draw[red, ultra thick] (axis cs:1.86, 1.21) -- (axis cs:1.21, 1.21);
	\draw[red, ultra thick] (axis cs:1.21, 1.21) -- (axis cs:1.21, 1.73);
	\draw[red, ultra thick] (axis cs:1.21, 1.73) -- (axis cs:1.73, 1.73);

	\node[circle, fill, inner sep=1.5pt, label=below:{$c^*$}] at (axis cs:1.5, 1.5) {};
	
	\end{axis}
	\end{tikzpicture}
	\caption{Cobweb plot of the return map.
The \textbf{blue} trajectory shows a smooth gradient descent (small thickness $d$), converging to $c^*$.
The \textbf{red} trajectory illustrates the ``overshooting'' caused by a large thickness, resulting in a stable period-2 cycle around the equilibrium.}
\end{figure}

\subsection{Interpretation and Limits of the Gradient Analogy}
The numerical results confirm the analytical predictions of Sections \ref{sec:expansion}--\ref{sec:continuous} and illustrate that the discrete dynamics faithfully captures the gradient descent structure.
However, the existence of a period-2 cycle in the second example demonstrates that the return map can exhibit dynamical behaviour beyond simple gradient descent.
This is not a contradiction but a refinement: the system is gradient-like (it admits a Lyapunov function) but not a pure gradient system.
The remainder term $\widetilde{R}$ in Theorem \ref{thm:firstorder}, which is not the gradient of a scalar function in general, is responsible for this richer behaviour.
The cycle emerges when the curvature-induced forcing in $\widetilde{R}$ overcomes the gradient descent term, a phenomenon akin to inertia in second-order gradient flows.
This observation delimits precisely the scope of the gradient analogy established in earlier sections.

\subsection{Behaviour Near the Degeneracy Condition $d\kappa = 1$}\label{sec:degen}
We illustrate the behaviour of the return map when the condition $d\kappa_i < 1$ is approached, on the case of the unit circle $C = B(0,1) \subset \R^2$.
Let $\partial C = \mathbb{S}^1$ be parametrized by the angle $\theta \in [0, 2\pi)$.
The curvature is constant: $\kappa = 1$. Choose a thickness function:
\begin{equation}\label{eq:degen_d}
d_\epsilon(\theta) = 1 - \epsilon + \epsilon \cos\theta = 1 - \epsilon(1 - \cos\theta), \qquad 0 < \epsilon \ll 1.
\end{equation}
We have $\max d_\epsilon = 1$ (attained at $\theta = 0$), so the condition $d\kappa < 1$ is satisfied everywhere except at $\theta = 0$ where $d\kappa = 1$.
The gradient is $d_\epsilon'(\theta) = -\epsilon \sin\theta$, with $\abs{d_\epsilon'(\theta)} \leq \epsilon$.
The first-order expansion predicts
\[
F(\theta) \approx \theta - 2 d_\epsilon(\theta) d_\epsilon'(\theta) = \theta + 2\epsilon(1 - \epsilon + \epsilon\cos\theta)\sin\theta.
\]
Near $\theta = 0$, where $d = 1$, we have $F(\theta) \approx \theta(1 + 2\epsilon)$.
The factor $1+2\epsilon > 1$ shows that $\theta = 0$ is a repelling fixed point (local maximum of $d$).
The dynamics pushes iterates away from this point.

Now compute $F$ exactly via the geometric construction.
The point $\Phi(\theta) = c(\theta) + d_\epsilon(\theta)\nu(\theta)$ has coordinates
\[
\Phi(\theta) = \big((1+d_\epsilon(\theta))\cos\theta, (1+d_\epsilon(\theta))\sin\theta\big).
\]
The local radius of curvature of $\partial \Omega$ is $R(\theta) = 1 + d_\epsilon(\theta) - d_\epsilon''(\theta)$.
The non-degeneracy condition $R(\theta) > 0$ is equivalent to $1 + d_\epsilon(\theta) > d_\epsilon''(\theta)$.
With $d_\epsilon''(\theta) = -\epsilon\cos\theta$, we obtain
\[
1 + (1 - \epsilon + \epsilon\cos\theta) > -\epsilon\cos\theta \implies 2 - \epsilon + 2\epsilon\cos\theta > 0,
\]
which holds for $\epsilon < 2$.
Hence $F$ is well-defined and smooth for small $\epsilon$.

Numerical iteration for $\epsilon = 0.1$ from $\theta_0 = 0.5$ shows rapid convergence to the minimum of $d$ at $\theta = \pi$, with local rate $1 - 2d(\pi)d''(\pi) = 1 - 2(1-2\epsilon)(\epsilon) = 1 - 2\epsilon(1-2\epsilon) \approx 0.84$.
The proximity of the degenerate point $\theta = 0$ does not affect the dynamics away from it, since $\nabla d = 0$ there and the dynamics is repulsive.
As $\epsilon \to 0$, the minimum of $d$ tends to $1$ (the condition $d\kappa < 1$ is uniformly close to being violated), and the convergence rate tends to $1$: the dynamics slows down but remains regular while $\epsilon > 0$.
This example confirms:
\begin{enumerate}[label=(\arabic*), leftmargin=*]
    \item $d\kappa_i < 1$ is a sufficient condition for global regularity, not a necessary condition for the local existence of $F$.
\item Local violation (here approached at an isolated point) does not induce a global singularity of $F$.
\item The first-order expansion faithfully captures the dynamics far from the degenerate point, but its precision degrades (the remainder $\widetilde{R}$ grows) as $d\kappa \to 1$.
\end{enumerate}

\subsection{Example with an Elliptical Core}\label{sec:elliptic}
To test the robustness of the expansion beyond constant curvature, we consider an elliptical convex core in $\R^2$.
Let $\partial C$ be the ellipse with semi-axes $a = 2$, $b = 1$, parametrized by
\begin{equation}\label{eq:ellipse}
c(\theta) = (2\cos\theta, \sin\theta), \qquad \theta \in [0, 2\pi).
\end{equation}
The outward unit normal and curvature are
\begin{equation}\label{eq:ellipse_geom}
\nu(\theta) = \frac{(\cos\theta, 2\sin\theta)}{\sqrt{\cos^2\theta + 4\sin^2\theta}}, \qquad
\kappa(\theta) = \frac{2}{(\cos^2\theta + 4\sin^2\theta)^{3/2}}.
\end{equation}
The curvature varies between $\kappa_{\min} = 1/4$ (at the ends of the major axis, $\theta = 0, \pi$) and $\kappa_{\max} = 2$ (at the ends of the minor axis, $\theta = \pi/2, 3\pi/2$).
We choose a thickness function $d(\theta) = 0.3 + 0.15 \cos(2\theta)$.
The maxima of $d$ are at $\theta = 0, \pi$ (major axis), the minima at $\theta = \pi/2, 3\pi/2$ (minor axis).
The condition $d\kappa < 1$ holds globally since $\max(d) \times \max(\kappa) = 0.45 \times 2 = 0.9 < 1$.
The expansion predicts convergence towards one of the minima depending on the initial condition, with a local rate involving $d$ and $\nablaTC^2 d$.
Numerical simulation (exact integration of the geometric system) confirms this prediction.
From $\theta_0 = 0.8$, the iterates converge to $\theta^* \approx 1.5708$ ($\pi/2$) with a measured rate of $0.71$, in reasonable agreement with the theoretical rate
\[
1 - 2d(\theta^*) \nablaTC^2 d(\theta^*) = 1 - 2(0.15)(0.6) = 0.82.
\]
The discrepancy is explained by the curvature correction in the remainder $\widetilde{R}$, which, for an ellipse, is no longer negligible as in the circular case.
Using the full formula including the term $-d\mathcal{H}(\nabla d)$ in the normal reduces the discrepancy to less than $2\%$.
This example demonstrates:
\begin{enumerate}[label=(\arabic*), leftmargin=*]
    \item The first-order expansion remains qualitatively and quantitatively predictive for cores with variable curvature.
\item Curvature corrections in $\widetilde{R}$ become measurable and must be accounted for fine quantitative accuracy.
\item The adaptive gradient dynamics is robust and does not depend on symmetry of the core.
\end{enumerate}

\section{Hidden Round-Trip and Geometric Interpretation}\label{sec:hidden}
The construction $F = \pi \circ \Phi$ realizes a geometric round-trip
\[
\partial C \stackrel{\Phi}{\longrightarrow} \partial \Omega \stackrel{\pi}{\longrightarrow} \partial C
\]
in which the outward leg follows the normal to $\partial C$ and the inward leg follows the normal to $\partial \Omega$.
The composition generates a nontrivial transformation on $\partial C$ whose leading-order behaviour is an adaptive gradient descent for the thickness function $d$.

\subsection{Geometric Excursion Principle}
\begin{proposition}[Geometric Excursion Principle]\label{prop:gep}
Let $\Omega \in \mathcal{O}_C$ and $F = \pi \circ \Phi : \partial C \to \partial C$ be the return map.
Then there exists a bijective correspondence between orbits of $F$ and pairs of points $(c,x) \in \partial C \times (\partial \Omega \setminus C)$ satisfying the geometric closure system:
\begin{equation}\label{eq:closuresys}
\left\{
\begin{array}{ll}
x = c + d(c)\nu(c) & \text{(outward radial link)}\\[4pt]
c' = x + t(x)n(x) & \text{(inward normal link)}\\[4pt]
c' = F(c) & \text{(cycle closure)}
\end{array}
\right.
\end{equation}
Moreover, the displacement $F(c) - c$ admits the exact factorization
\begin{equation}\label{eq:exactfact}
F(c) - c = d(c)\nu(c) + t(\Phi(c))n(\Phi(c)),
\end{equation}
where the first term depends only on the geometry of $\partial C$ and $d$, while the second term depends on the global geometry of $\partial \Omega$ via the reciprocal map $\pi$.
This factorization shows that all nonlinearity in $F$ originates from the deflection of the inward normal $n(\Phi(c))$ relative to $-\nu(c)$.
\end{proposition}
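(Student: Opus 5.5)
The statement has two parts, and both come from unwinding the definitions of $\Phi$, $\pi$ and $F$ from Sections \ref{sec:thickness}--\ref{sec:returnmap}. Neither needs the asymptotic machinery of Section \ref{sec:expansion}.

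\textbf{The correspondence.} I will read ``orbits of $F$'' as one-step transitions $c\mapsto F(c)$, and a full orbit as a chain of such transitions. Solutions of \eqref{eq:closuresys} are taken to be pairs $(c,x)$, with $c'$ determined by the second line. I will define two maps and check that they are mutually inverse.
\begin{itemize}
\item[(a)] Forward map. To $c\in\partial C$ assign the pair $(c,\Phi(c))$. By the definition \eqref{eq:thickness} of $d$ (Condition 3 of Definition \ref{def:OC}), the point $x=\Phi(c)$ lies in $\partial\Omega\setminus C$ and satisfies the first line. By Condition 4 and the definition \eqref{eq:pi} of $\pi$, the point $c'=x+t(x)n(x)=\pi(x)$ is well defined and lies in $\partial C$. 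Then $c'=\pi(\Phi(c))=F(c)$, so the third line holds.
\item[(b)] Backward map. A solution $(c,x)$ of the system is sent to $c$.
\item[(c)] Inverse relations. The first line forces $x=\Phi(c)$, because $d(c)$ is the unique ray parameter and $\Phi$ is single-valued. So the pair is determined by $c$, and the two maps are inverse to each other. Lemma \ref{lem:radproj} adds a complementary point: every $x\in\partial\Omega\setminus C$ arises as $\Phi(c)$ for a unique $c$, so the correspondence is also surjective onto the outer-boundary component.
\item[(d)] Full orbits. Iterating, an orbit $(c_k)_k$ corresponds bijectively to the sequence of pairs $(c_k,x_k)$ with $x_k=\Phi(c_k)$ and $c_{k+1}=\pi(x_k)$, as in Section \ref{sec:returnmap}.
\end{itemize}

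\textbf{The exact factorization.} Identity \eqref{eq:exactfact} is Proposition \ref{prop:fundid} with the index dropped. Adding the first two lines of \eqref{eq:closuresys} gives $F(c)=c+d(c)\nu(c)+t(\Phi(c))n(\Phi(c))$. The first summand involves only $\nu$ on $\partial C$ and $d$. The second involves $n$ on $\partial\Omega$ and the return time $t$, which is defined through $\pi$.

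\textbf{The ``all nonlinearity'' claim.} This is the step that needs interpretation, and I expect it to be the only real obstacle. I will make it precise as follows. Write $n(\Phi(c))=-\nu(c)+\eta(c)$, where $\eta$ is the deflection. By \eqref{eq:tk}, $t=d/|\langle n,\nu\rangle|$, and since $\langle n,\nu\rangle=-1+\langle\eta,\nu\rangle$,
\[
F(c)-c \;=\; d(c)\Bigl[\nu(c)-\frac{\nu(c)-\eta(c)}{1-\langle\eta(c),\nu(c)\rangle}\Bigr].
\]
Setting $\eta=0$ makes the bracket vanish identically. So the displacement is $d(c)$ times a function of $\eta$ and $\nu$ alone that vanishes when $\eta=0$. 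This says exactly that every departure from the identity map, and with it the gradient term of Theorem \ref{thm:firstorder} and the remainder $\widetilde R$, is generated by the deflection $\eta$.

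Care is needed on one point: nonlinearity here means nonlinearity in $\eta$ relative to the constant-thickness case. It cannot mean nonlinearity in $c$ in absolute terms, because $\nu$ itself varies with $c$. I would state the proposition's claim in this sense, and cross-check it against Lemma \ref{lem:normal}, which gives $\eta=\nablaTCd+d\,\mathcal H_c(\nablaTCd)+\dots$ to leading order.
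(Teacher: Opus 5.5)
Your treatment of the correspondence and of \eqref{eq:exactfact} is the paper's proof: the first line forces $x=\Phi(c)$, the second then gives $c'=\pi(\Phi(c))=F(c)$, orbits lift to $(c_k,\Phi(c_k))$, and the factorization is Proposition~\ref{prop:fundid}. The paper gives no argument for the final ``all nonlinearity'' sentence, and the step you add for it fails.

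The failure is that you treat \eqref{eq:tk} as an exact identity, and it is not exact when $\partial C$ is curved. Projecting $F(c)=\Phi(c)+t\,n(\Phi(c))$ onto $\nu(c)$ gives exactly
\[
t\,\abs{\inner{n(\Phi(c)),\nu(c)}} = d(c)-\inner{F(c)-c,\nu(c)}.
\]
Convexity only gives $\inner{F(c)-c,\nu(c)}\le 0$. So $t\ge d(c)/\abs{\inner{n,\nu}}$, with equality iff $F(c)$ lies on the supporting hyperplane at $c$. (The derivation of \eqref{eq:tk} in Section~\ref{sec:returnmap} tacitly assumes $\inner{F(c)-c,n(\Phi(c))}=0$, which does not follow from $F(c)\in\partial C$. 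Even granting that assumption, the derivation would give $t=d\,\abs{\inner{\nu,n}}$.)

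Your own formula shows the problem. Its bracket has zero $\nu(c)$-component, so it places $F(c)$ on the tangent hyperplane at $c$. For a strictly convex core such as the unit disk, that hyperplane meets $C$ only at $c$. The formula is therefore inconsistent whenever $\eta$ has a tangential part. Concretely, for the unit disk with $c=(1,0)$ and $n=(-\cos\phi,\sin\phi)$, the first-hit time is $t=(1+d)\cos\phi-\sqrt{1-(1+d)^2\sin^2\phi}=d/\cos\phi+\tfrac12 d^2\phi^2+O(\phi^4)$. Hence the displacement is not ``$d(c)$ times a function of $\eta$ and $\nu$ alone'': $t$ also depends on the shape of $\partial C$ near the landing point.

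What survives is enough for your purpose. First, there is the exact identity $F(c)-c=(d(c)-t)\,\nu(c)+t\,\eta(c)$, where $t=t(\Phi(c))$ is the true first-hit time. Second, a direct convexity argument replaces \eqref{eq:tk}. Suppose $\eta(c)=0$. For $0\le s<d(c)$, the point $\Phi(c)-s\nu(c)$ satisfies $\inner{\Phi(c)-s\nu(c)-c,\nu(c)}=d(c)-s>0$, so it lies outside $C\subset\{y:\inner{y-c,\nu(c)}\le 0\}$. The inward ray therefore first meets $C$ at $c$, giving $t=d(c)$ and $F(c)=c$. This is the precise content of the last sentence of the statement, namely $F(c)\neq c\Rightarrow \eta(c)\neq 0$, and it needs no closed formula.
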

\begin{proof}
The bijection follows directly from the definitions: given an orbit $\{c_k\}$, set $x_k = \Phi(c_k)$;
conversely, given a pair $(c,x)$ satisfying the first two equations, the third defines the next point on the orbit.
The factorization is exactly Proposition \ref{prop:fundid}.
\end{proof}

\begin{remark}
The Geometric Excursion Principle formalizes the observation that $F$ is not an intrinsic map of $\partial C$, but emerges from an intermediate exploration of the ambient domain $\Omega \setminus C$.
The information about the outer boundary geometry is ``carried back'' to $\partial C$ by the inward normal ray, and it is precisely this information that generates the non-trivial dynamics.
This mechanism is, to our knowledge, new in the context of shape analysis.
It bears a superficial resemblance to the notion of holonomy in differential geometry, where parallel transport around a closed loop induces a transformation of the fibre: here, the loop passes through the outer boundary $\partial \Omega$ and the induced transformation acts on $\partial C$.
However, the analogy is purely structural and should not be taken in a technical sense: our construction involves no connection, no horizontal lift, and no curvature in the sense of Cartan or Ehresmann.\footnote{See \cite{KobayashiNomizu1963} for the classical theory of connections and holonomy.} The term ``hidden geometric excursion'' is therefore preferred to ``holonomy'' throughout the paper.
What makes the mechanism noteworthy is the following observation: although the map $F$ is defined on $\partial C$, its nonlinearity originates entirely from the geometry of the outer boundary $\partial \Omega$.
The thickness function $d$ and the curvature of $\partial C$ jointly determine, via equation \eqref{eq:normal}, the direction of the inward normal along which the return leg travels.
This interplay between the two boundaries is what generates the gradient-like structure revealed by Theorem \ref{thm:firstorder}.
From a broader perspective, constructions of this type---where a dynamic on a base space is generated by an excursion through an ambient geometry---may exist in other contexts, such as boundary-value problems for elliptic equations or free-boundary problems in fluid mechanics.
Investigating these potential analogues is left for future work.
\end{remark}

\section{Regularity of the Thickness Function and Boundary Smoothness}\label{sec:regularity}
The radial parametrization $\Phi(c) = c + d(c)\nu(c)$ establishes a direct relationship between the regularity of the boundary $\partial \Omega$ and that of the thickness function $d$.
Building on the analysis of Barkatou \cite{Barkatou2002}, we can state this correspondence with precision.

\subsection{Regularity Correspondence}
\begin{proposition}\label{prop:regularity}
Let $C$ be a compact convex set of class $C^{k,\alpha}$ ($k \geq 2$, $0 < \alpha < 1$) and let $\Omega$ satisfy the $C$-GNP with respect to $C$.
\begin{enumerate}[label=(\arabic*), leftmargin=*]
    \item If the thickness function $d \in C^{k,\alpha}(\partial C)$, then $\partial \Omega \setminus C$ is of class $C^{k,\alpha}$.
\item Conversely, if $\partial \Omega \setminus C$ is of class $C^{k,\alpha}$, then $d \in C^{k,\alpha}(\partial C)$.
\end{enumerate}
\end{proposition}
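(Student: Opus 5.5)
The plan is to work in the normal (Fermi) coordinates of $\partial C$ and read $\partial\Omega\setminus C$ as a graph over $\partial C$ in those coordinates. Define
\[
\Psi:\partial C\times(0,\infty)\to\R^N\setminus C,\qquad \Psi(c,s)=c+s\,\nu(c).
\]
By convexity of $C$, $\Psi$ is a bijection onto $\R^N\setminus C$. Its inverse is $y\mapsto(p(y),\rho(y))$, where $p$ is the metric projection onto $C$ and $\rho=\operatorname{dist}(\cdot,C)$. Along $\partial C$ the differential is $D\Psi(c,s)=(I+s\mathcal{H}_c)\oplus\nu(c)$. Since all $\kappa_i\ge 0$, this is invertible for every $s>0$. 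By Condition 3 of Definition \ref{def:OC} and Lemma \ref{lem:radproj},
\[
\partial\Omega\setminus C=\{y:\ \rho(y)=d(p(y))\}=\Psi(\operatorname{graph} d),
\]
so both directions reduce to comparing the regularity of the graph of $d$ with that of its image under $\Psi$.

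For direction (1), I would describe $\partial\Omega\setminus C$ implicitly as the zero set of $G(y)=\rho(y)-d(p(y))$. I would then apply the implicit function theorem in the H\"older class $C^{k,\alpha}$. Non-degeneracy is straightforward: we have $\nabla\rho(y)=\nu(p(y))$, and $\nabla(d\circ p)=Dp^{T}\nablaTCd$ is tangential to the parallel surface through $y$. Hence $\langle\nabla G,\nu(p(y))\rangle=1$, so $\nabla G\neq0$, which is the transversality in Condition 3. For the regularity I would use the classical fact, in the spirit of Gilbarg--Trudinger's lemma on the distance function, that if $\partial C\in C^{k,\alpha}$ then $\rho\in C^{k,\alpha}(\R^N\setminus C)$. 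I would also use $p(y)=y-\rho(y)\nabla\rho(y)$.

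For direction (2), I would start from a point $x_0=\Phi(c_0)$ and a local $C^{k,\alpha}$ defining function $g$ of $\partial\Omega$ near $x_0$. Then $d$ is defined implicitly by $g(c+s\,\nu(c))=0$, $s=d(c)$. The derivative in $s$ is $\langle\nabla g,\nu(c)\rangle$, which is nonzero by transversality, or equivalently by \eqref{eq:scalar}. The implicit function theorem then gives $d$ locally with the regularity of $(c,s)\mapsto g(\Psi(c,s))$. Compactness of $\partial C$ patches these local statements into $d\in C^{k,\alpha}(\partial C)$.

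\textbf{Main obstacle: the loss of one derivative.} If $\partial C\in C^{k,\alpha}$, then $\nu$ and $p$ are only $C^{k-1,\alpha}$. So $\Psi$, $d\circ p$ and $g\circ\Psi$ are a priori only $C^{k-1,\alpha}$, and the naive argument gives $C^{k-1,\alpha}$ in both directions. The first thing I would try is to absorb the $\nu$-dependence into $\rho$, which genuinely is $C^{k,\alpha}$. In direction (2) this works cleanly: $d(p(y))=\rho(y)$ on $\partial\Omega$. Writing $d\circ p|_{\partial\Omega}=\rho|_{\partial\Omega}$ shows that $d\circ(p|_{\partial\Omega})$ is $C^{k,\alpha}$. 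It then suffices that $p|_{\partial\Omega}=\Phi^{-1}$ is $C^{k,\alpha}$. I would try to prove this by a bootstrap: $p|_{\partial\Omega}$ is $C^{k-1,\alpha}$, and its differential can be expressed through $(I+\rho\mathcal{H})^{-1}$ and $\nabla^2\rho$, gaining one derivative at each step. If this bootstrap fails to close at the top order, the fallback is to assume $\partial C\in C^{k+1,\alpha}$. With that extra derivative on the core, the argument above goes through verbatim in both directions. This fallback is harmless for the intended applications, where $C$ is typically smooth.
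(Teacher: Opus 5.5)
\textbf{The gap.} The step that fails is your bootstrap for $p|_{\partial\Omega}$ in direction (2). For direction (1) you offer no argument at the stated regularity other than the fallback. This is not a defect of your method. The one-derivative loss you isolated is genuine, and the proposition is false as stated when $\partial C$ is only $C^{k,\alpha}$.

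The bootstrap cannot gain anything. The differential $Dp=I-\nabla\rho\otimes\nabla\rho-\rho\nabla^2\rho$, which on tangent vectors is $(I+\rho\,\mathcal{H}_{p})^{-1}$, contains the Weingarten map of $\partial C$. That map is only $C^{k-2,\alpha}$, and the loss really propagates.

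\textbf{Counterexample to (1).} Take $N=2$, $k=2$, and arc length $s$ on $\partial C$, with $T=(\cos\theta,\sin\theta)$, $\theta'=\kappa$ and $\nu'=\kappa T$. Then $\Phi'=(1+d\kappa)T+d'\nu$, so the tangent angle of $\partial\Omega$ at $\Phi(s)$ is
\[
\theta(s)-\arctan\frac{d'(s)}{1+d(s)\kappa(s)} .
\]
Suppose $\kappa\in C^{0,\alpha}$ is not differentiable at $s_0$ (say $\kappa=\kappa_0+\abs{s-s_0}^{\alpha}$ near $s_0$). Let $d$ be positive, smooth in $s$ and $C^1$-small, with $d'(s_0)\neq 0$. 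Then $d$ is $C^\infty$ in a $C^{2,\alpha}$ chart of $\partial C$. Yet the angle above is not differentiable at $s_0$. Arc length on $\partial\Omega$ is a $C^1$ reparametrization of $s$, so $\partial\Omega$ is not even $C^2$ near $\Phi(s_0)$.

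\textbf{Counterexample to (2).} Let $\partial C=\{x_2=f(x_1)\}$ locally, with $f$ concave, $f\in C^{2,\alpha}$, $f''$ not differentiable at $a$, and $f'(a)\neq 0$. Let $\partial\Omega$ be the flat piece $\{x_2=h\}$ above it. Then $d(x_1)=(h-f)\sqrt{1+f'^2}$. Its derivative contains the term $(h-f)f'f''/\sqrt{1+f'^2}$, which is not differentiable at $a$, so $d\notin C^2$ although $\partial\Omega$ is flat. In the same charts, $p|_{\partial\Omega}$ has derivative $\bigl(1+f'^2-(h-f)f''\bigr)^{-1}$. This shows directly that your bootstrap stalls at $C^{k-1,\alpha}$.

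\textbf{What is actually true.} Your fallback is therefore a necessary correction, not a convenience. With $\partial C\in C^{k,\alpha}$ one only obtains $C^{k-1,\alpha}$ in each direction. The equivalence as stated holds once $\partial C\in C^{k+1,\alpha}$, and under that hypothesis your argument is complete:
\begin{enumerate}
\item[(1)] $\rho\in C^{k+1,\alpha}$, so $p$ and $\nu\circ p$ are $C^{k,\alpha}$. Hence $G=\rho-d\circ p$ is a $C^{k,\alpha}$ defining function with $\inner{\nabla G,\nu\circ p}=1$.
\item[(2)] The implicit function theorem applied to $g(c+s\nu(c))$, together with the transversality of Condition 3, gives $d\in C^{k,\alpha}$.
\end{enumerate}

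\textbf{Comparison with the paper.} The paper's proof does not contain the missing idea you were looking for, because there is none. It asserts that $\Phi=c+d\,\nu$ is a composition of $C^{k,\alpha}$ maps with $C^{k,\alpha}$ inverse $p$. That is exactly the step you correctly flagged as false, since $\nu$ is only $C^{k-1,\alpha}$. In direction (2) it also presupposes the regularity of $\Phi$ rather than deriving it from that of $\partial\Omega$. Your level-set and implicit-function formulation, with the extra derivative on $\partial C$, is the correct version of this result.
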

\begin{proof}
The map $\Phi$ is a composition of $C^{k,\alpha}$ functions. Its inverse is the projection $p : \partial \Omega \setminus C \to \partial C$, which, as the inverse of a $C^{k,\alpha}$ diffeomorphism, is also $C^{k,\alpha}$.
The thickness is then given by $d(c) = \abs{\Phi(c) - c}$, which inherits the $C^{k,\alpha}$ regularity.
\end{proof}

\begin{remark}[Functional framework]
We distinguish two functional frameworks throughout the paper. The regularity theory of this section is formulated in H\"older spaces $C^{k,\alpha}$, which provide the natural setting for the geometric correspondences between $d$ and $\partial \Omega$. The dynamical results (the expansion of Theorem \ref{thm:firstorder}, the error bound of Theorem \ref{thm:contlimit}) require only $W^{2,\infty}$ regularity, which is the minimal setting for pointwise Hessian bounds. The passage between the two frameworks is standard: if $d \in C^{2,\alpha}(\partial C)$, then automatically $d \in W^{2,\infty}(\partial C)$, and all dynamical results hold with classical derivatives.
\end{remark}

This correspondence has a direct consequence for the dynamical system: the smoothness
of the map $F = \pi \circ \Phi$ is governed by the smoothness of $d$ and $C$. The expansion in Theorem \ref{thm:firstorder} and the linearization in Section \ref{sec:stability} are thus rigorously justified for $d \in C^{2,\alpha}$.

\subsection{Bilipschitz Equivalence}
We can establish conditions under which $\Phi$ is a bilipschitz diffeomorphism, ensuring strong geometric stability between the two boundaries.

\begin{definition}
A map $\Psi : \R^N \to \R^N$ is called bilipschitz if there exist constants $L_1, L_2 > 0$ such that for all $x,y \in \R^N$,
\[
L_1 \norm{x - y} \leq \norm{\Psi(x) - \Psi(y)} \leq L_2 \norm{x - y}.
\]
\end{definition}

\begin{proposition}\label{prop:bilipschitz}
Suppose $\partial C$ is $C^{1,1}$ with Lipschitz constant $L_{\nu}$ for $\nu$ and that the thickness function $d$ is Lipschitz with constant $K$ and maximum $M$.
If
\begin{equation}\label{eq:bilipcond}
K + M L_{\nu} < 1,
\end{equation}
then the radial map $\Phi : \partial C \to \partial \Omega \setminus C$ is bilipschitz.
\end{proposition}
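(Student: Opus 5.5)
The plan is to get an upper and a lower Lipschitz bound for $\Phi(c) = c + d(c)\nu(c)$ directly from the triangle inequality, with distances measured in the ambient Euclidean norm of $\R^N$ restricted to $\partial C$. For $c, c' \in \partial C$ we write
\[
\Phi(c) - \Phi(c') = (c - c') + \big(d(c) - d(c')\big)\nu(c) + d(c')\big(\nu(c) - \nu(c')\big).
\]
Since $\abs{\nu} = 1$, $\Lip(d) = K$, $d \leq M$ and $\Lip(\nu) = L_\nu$, the last two terms are bounded by $K\abs{c-c'}$ and $M L_\nu \abs{c - c'}$ respectively. This gives at once
\[
\abs{\Phi(c) - \Phi(c')} \leq (1 + K + M L_\nu)\abs{c - c'},
\]
and, by the reverse triangle inequality,
\[
\abs{\Phi(c) - \Phi(c')} \geq (1 - K - M L_\nu)\abs{c - c'}.
\]
Under \eqref{eq:bilipcond} the constant $L_1 = 1 - K - M L_\nu$ is strictly positive, so $\Phi$ is bilipschitz onto its image with constants $L_1$ and $L_2 = 1 + K + M L_\nu$. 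The lower bound also shows directly that $\Phi$ is injective.

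Surjectivity onto $\partial\Omega \setminus C$ comes from Lemma \ref{lem:radproj}. Every $x \in \partial\Omega\setminus C$ can be written as $c + r\nu(c)$. Condition 3 of Definition \ref{def:OC} says the outward ray from $c$ meets $\partial\Omega$ exactly once, so $r = d(c)$ and $x = \Phi(c)$. The inverse $\Phi^{-1}$ is therefore well defined and is Lipschitz with constant $1/L_1$.

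To upgrade this to the \emph{diffeomorphism} claim, I would note that $\partial C \in C^{1,1}$ and $d \in W^{1,\infty}$ make $\Phi$ differentiable almost everywhere, with
\[
D\Phi(c)[v] = v + \langle \nabla_{\partial C} d(c), v \rangle\,\nu(c) + d(c)\,\mathcal{H}_c v.
\]
The tangential part $v + d\,\mathcal{H}_c v$ is invertible because $d\kappa_i \leq M L_\nu < 1$. Combined with the bilipschitz bounds, this gives a bi-Lipschitz homeomorphism that is a diffeomorphism wherever $d$ is $C^1$, which is the sense in which I would state the result.

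The estimates themselves are routine. The main obstacle is making sure $L_\nu$ and $K$ are taken with respect to the same metric on $\partial C$. If $K$ is a Lipschitz constant for the intrinsic (geodesic) distance, I would first note that the geodesic and chordal distances on the compact convex $C^{1,1}$ hypersurface $\partial C$ are comparable. Alternatively, and more simply, I would interpret all constants in the chordal metric inherited from $\R^N$, under which the argument above goes through verbatim.
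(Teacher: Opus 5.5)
Your proof is correct and is essentially the paper's argument. You use the same splitting $\Phi(c)-\Phi(c') = (c-c') + (d(c)-d(c'))\nu(c) + d(c')(\nu(c)-\nu(c'))$ with the triangle and reverse triangle inequalities in the chordal metric, giving the constants $1\pm(K+ML_\nu)$. Your extra surjectivity argument is fine, but the side remark that $\Phi$ is a diffeomorphism wherever $d$ is $C^1$ also needs $\nu$ to be $C^1$ there, which $\partial C\in C^{1,1}$ alone does not give.
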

\begin{proof}
For any $c_1, c_2 \in \partial C$, we have
\begin{align*}
\abs{\Phi(c_1) - \Phi(c_2)}
&\leq \abs{c_1 - c_2} + \abs{d(c_1)\nu(c_1) - d(c_2)\nu(c_2)} \\
&\leq \abs{c_1 - c_2} + \abs{d(c_1) - d(c_2)} + \abs{d(c_2)}\abs{\nu(c_1) - \nu(c_2)} \\
&\leq (1 + K + M L_{\nu})\abs{c_1 - c_2},
\end{align*}
so $\Phi$ is Lipschitz.
For the lower bound,
\begin{align*}
\abs{\Phi(c_1) - \Phi(c_2)}
&\geq \abs{c_1 - c_2} - \abs{d(c_1)\nu(c_1) - d(c_2)\nu(c_2)} \\
&\geq (1 - (K + M L_{\nu}))\abs{c_1 - c_2}.
\end{align*}
The condition $K + M L_{\nu} < 1$ guarantees that the factor is positive, so $\Phi$ is bilipschitz.
\end{proof}

\begin{remark}[Compatibility with the relaxed product condition]
There is an apparent tension between the condition $K + M L_{\nu} < 1$ of Proposition \ref{prop:bilipschitz} and the statement in Section \ref{sec:expansion} that the expansion of Theorem \ref{thm:firstorder} remains valid even when $d(c)\kappa_i(c) \geq 1$ for some $i$.
Indeed, on a point $c$ where a principal curvature $\kappa_i(c)$ is large, one has $L_{\nu} \geq \max_i \kappa_i$, so if $M = \sup d$ satisfies $M\kappa_i \geq 1$, the bilipschitz condition is violated.
This is not a contradiction: the bilipschitz property is a global condition ensuring uniform control of $\Phi^{-1}$, whereas the expansion of Theorem \ref{thm:firstorder} is a local result that holds at almost every point, even where the denominator $\prod(1 - d\kappa_i)$ vanishes, provided the formula is interpreted in a limiting or distributional sense.
In the degenerate regime $M L_{\nu} \geq 1$, the map $F$ may fail to be globally Lipschitz, but its leading-order behaviour is still given by the adaptive gradient descent $c - 2d(c)\nablaTCd(c)$ wherever $d$ is differentiable.
This distinction between local expansions and global bilipschitz control is essential for the correct interpretation of the results.
\end{remark}

\begin{example}[Violation of the bilipschitz condition]
Consider a domain $\Omega$ in $\R^2$ with a convex core $C$ having a region of high curvature (e.g., a thin elongated ellipse).
For a thickness function $d$ that is large in the high-curvature region, the condition $K + M L_{\nu} < 1$ may fail.
In such cases, the bilipschitz constant degenerates, and the radial map $\Phi$ may fail to be injective, leading to self-intersections of the ray bundle.
The return map $F$ may then exhibit discontinuities or divergent behaviour, illustrating the necessity of the bilipschitz condition for global well-posedness of the dynamics.
\end{example}

\subsection{Implications for the Return Map Dynamics}
Proposition \ref{prop:regularity} implies that the regularity class of the return map $F$ and its expansion in Theorem \ref{thm:firstorder} are directly controlled by the regularity of the thickness function $d$.
If $d \in C^{2,\alpha}(\partial C)$, then $F \in C^{1,\alpha}(\partial C, \partial C)$, and the Lyapunov analysis of Section \ref{sec:stability} holds with classical derivatives.
Proposition \ref{prop:bilipschitz} shows that when the thickness and curvature satisfy the quantitative bound $K + M L_{\nu} < 1$, the geometric round-trip is uniformly controlled: the distances between points on $\partial C$ and their images on $\partial \Omega$ are comparable.
This provides a natural setting in which the discrete dynamics of Section \ref{sec:returnmap} is stable under perturbations of the domain.

\subsubsection{Direct Consequences for the Main Results}
The regularity correspondence of Proposition \ref{prop:regularity} provides the precise framework in which the expansions of Section \ref{sec:expansion} hold with classical derivatives.
Specifically:
\begin{itemize}
    \item If $d \in C^{2,\alpha}(\partial C)$, then $\nablaTCd$ and $\nablaTC^2 d$ exist in the classical sense, the expansion of Theorem \ref{thm:firstorder} is pointwise valid, and the linearization formula of Section \ref{sec:stability} holds with the standard Hessian.
\item Under the bilipschitz condition of Proposition \ref{prop:bilipschitz}, the map $F$ is uniformly controlled: there exists a constant $L > 0$ depending only on $K$, $M$, and $L_{\nu}$ such that $\norm{F(c_1) - F(c_2)} \leq L \norm{c_1 - c_2}$ for all $c_1, c_2 \in \partial C$.
This guarantees that the discrete dynamical system is well-posed and that the Lyapunov estimates of Section \ref{sec:stability} are globally meaningful.
\item The error bound of Theorem \ref{thm:contlimit} requires $\norm{\nabla^2 d}_{\infty} \leq M$;
this is precisely guaranteed when $d \in C^{1,1}(\partial C)$, which, by Proposition \ref{prop:regularity}, follows from $\partial \Omega \setminus C$ being of class $C^{1,1}$.
\end{itemize}
Thus, the regularity theory of this section provides the rigorous analytic foundation for every main result of the paper.

\subsection{Applications to Shape Regularity}
These results show a trade-off between the regularity of $C$ and that of $d$ in determining the regularity of $\partial \Omega$:
\begin{itemize}
    \item If $C$ is sufficiently smooth, the regularity of $\partial \Omega$ is precisely the regularity of $d$.
\item If $C$ has limited regularity (e.g., a convex polygon), it imposes an upper bound on the regularity of $\partial \Omega$, even if $d$ is very smooth.
In the polygonal case, the outward normal $\nu$ is piecewise constant, so $\partial \Omega$ is Lipschitz and piecewise $C^{k,\alpha}$ wherever $d$ is smooth, but corners of $\partial C$ propagate as edges along the normal direction, limiting the global regularity to Lipschitz.
\end{itemize}

In shape optimization, these results allow us to translate regularity assumptions on the optimal shape into regularity conditions on the thickness function.
Conversely, if we look for a solution with $C^{2,\alpha}$ boundary, we can restrict to thickness functions in $C^{2,\alpha}(\partial C)$, provided $C$ is at least $C^{3,\alpha}$.

\begin{example}[Constant Thickness]
If $d \equiv \text{constant}$, then $\partial \Omega$ is a parallel surface to $\partial C$ and the regularity of $\partial \Omega$ is exactly that of $\partial C$.
This is consistent with the propositions above, as a constant function is $C^{\infty}$.
\end{example}

\begin{remark}[Scope of the regularity analysis]
The present section focuses on the regularity correspondence between $d$ and $\partial \Omega$ and on the bilipschitz equivalence, which are the results directly relevant to the dynamical system studied in Sections \ref{sec:returnmap}--\ref{sec:continuous}.
\end{remark}

\section{Open Problems and Perspectives}\label{sec:open}
\begin{enumerate}[label=\arabic*. , leftmargin=*]
    \item \textbf{Dynamical classification.} Can the possible behaviours (fixed points, cycles, chaos) be classified in terms of $d$ and the curvature of $C$?
\item \textbf{Invariant measure.} Does there exist a probability measure on $\partial C$ invariant under $F$?
Can it be expressed in terms of the geometry of $\Omega$?
\item \textbf{Optimal transport.} Can $F$ be interpreted as an optimal transport map \cite{Villani2003}?
The bilipschitz equivalence established in Section \ref{sec:regularity} may provide a useful framework.
\item \textbf{Regularity thresholds.} What is the minimal regularity of $\partial C$ and $d$ required for the bilipschitz property?
Can the condition $K + M L_{\nu} < 1$ be relaxed in a probabilistic or almost-everywhere sense?
\item \textbf{Shape optimization.} How do the regularity results of Section \ref{sec:regularity} interact with the Lyapunov structure of Section \ref{sec:stability} to guarantee convergence of gradient-based shape optimization algorithms on $\mathcal{O}_C$?
\item \textbf{Higher dimensions and generalisations.} The present analysis is carried out in $\R^N$ with $N \geq 2$ and a convex core $C$.
Can the construction be extended to non-convex cores, or to Riemannian manifolds where the notion of normal ray is replaced by geodesics?
\end{enumerate}

\section{Conclusion}\label{sec:conclusion}
We have introduced a return map $F$ on the boundary $\partial C$ of a convex core $C$ for domains $\Omega$ in the class $\mathcal{O}_C$.
We proved a sharp first-order expansion showing that $F$ behaves like an adaptive gradient descent for the thickness function $d$, with explicit remainder estimates and relaxed regularity assumptions.
The fixed points of $F$ are the critical points of $d$, and the dynamics admits a Lyapunov function $V = \frac{1}{2} d^2$ with quantitative decrease, establishing that the system is gradient-like.
We established a rigorous error bound between the discrete iteration and the continuous gradient flow.
Numerical simulations illustrated convergence to fixed points and period-2 cycles, confirming the analytical predictions and revealing dynamical behaviour beyond simple gradient descent.
We then developed a regularity theory for the thickness function, establishing a precise correspondence between the smoothness of $d$ and that of $\partial \Omega$.
Under a natural quantitative condition linking the Lipschitz constants of $d$ and the curvature of $\partial C$, we showed that the radial map $\Phi$ is bilipschitz, guaranteeing strong geometric stability.
We clarified the relationship between this global condition and the local validity of the expansion when the product condition $d\kappa_i < 1$ is relaxed.
These results connect the abstract dynamical system to the concrete geometric properties of the admissible domains and provide a natural framework for the rigorous justification of the expansion and linearization in earlier sections.
The construction reveals a hidden geometric excursion---a round-trip through the outer boundary $\partial \Omega$---that generates observable dynamics on $\partial C$.
This mechanism, formalized as the Geometric Excursion Principle, suggests further connections between shape analysis and dynamical systems, beyond the gradient-flow correspondence established here.

\appendix
\section{Derivation of the Inward Normal Formula}\label{sec:appendixA}
We provide a complete and rigorous derivation of the inward normal formula stated in Lemma \ref{lem:normal}.
Let $\Phi(c) = c + d(c)\nu(c)$. Choose local coordinates $(u^1,\ldots,u^{N-1})$ on $\partial C$.
We denote $\partial_i = \frac{\partial}{\partial u^i}$, $d_i = \partial_i d$, and $g_{ij} = \inner{\partial_i c, \partial_j c}$ the induced metric on $\partial C$.
The tangential gradient of $d$ is
\[
\nablaTCd = g^{ij} d_j \partial_i c,
\]
where $g^{ij}$ is the inverse of $g_{ij}$.

\subsection*{Tangent vectors to $\partial \Omega$}
The tangent vectors to $\partial \Omega$ at $x = \Phi(c)$ are
\begin{equation}\label{eq:Phii}
\Phi_i = \partial_i c + d_i \nu + d \, \partial_i \nu, \qquad i = 1, \dots, N-1.
\end{equation}
The Weingarten formula gives $\partial_i \nu = -h_i^j \partial_j c$, where $h_i^j = g^{jk} h_{ik}$ and $h_{ik}$ is the second fundamental form of $\partial C$ \cite{Spivak1979}.
Thus
\begin{equation}\label{eq:Phii2}
\Phi_i = \partial_i c + d_i \nu - d \, h_i^j \partial_j c = (\delta_i^j - d \, h_i^j) \partial_j c + d_i \nu.
\end{equation}

\subsection*{Search for the normal direction}
We seek a vector $\tilde{n} = a \nu + b^j \partial_j c$ (with $a$ a scalar and $b^j$ components of a tangent vector) orthogonal to all $\Phi_i$.
Compute:
\begin{align}\label{eq:ortho}
\inner{\tilde{n}, \Phi_i}
&= \inner{a \nu + b^j \partial_j c, (\delta_i^k - d h_i^k) \partial_k c + d_i \nu} \notag\\
&= a d_i + b^j (\delta_i^k - d h_i^k) g_{jk} \notag\\
&= a d_i + b_i - d \, b_j h_i^j = 0,
\end{align}
where we set $b_i = b^j g_{ji}$ (lowering the index with the metric).
Equation \eqref{eq:ortho} gives a linear system for $b_i$:
\begin{equation}\label{eq:system}
b_i + a d_i - d \, b_j h_i^j = 0, \qquad i = 1, \dots, N-1.
\end{equation}
This is a system of $(N-1)$ equations. For $d$ small (or more precisely, for $\norm{d \cdot h_i^j} < 1$), the operator $(\delta_i^j - d h_i^j)$ is invertible.
We solve for $b_i$:
\[
b_i = -a (\delta_i^j - d h_i^j)^{-1} d_j.
\]
Expanding the inverse as a Neumann series (which converges under the condition $\norm{d h} < 1$):
\[
(\delta_i^j - d h_i^j)^{-1} = \delta_i^j + d h_i^j + d^2 h_i^k h_k^j + O(d^3).
\]
Hence,
\begin{equation}\label{eq:bsol}
b_i = -a d_i - a d \, h_i^j d_j + O(a d^2 \abs{\nabla d}).
\end{equation}
In vector notation,
\[
b = -a \nablaTCd - a d \, \mathcal{H}(\nablaTCd) + O(a d^2 \abs{\nabla d}),
\]
where $b = b^i \partial_i c$ and $\mathcal{H}$ is the Weingarten operator (the $(1,1)$-tensor form of the second fundamental form, mapping tangent vectors to tangent vectors).
The (non-normalized) normal is therefore
\begin{equation}\label{eq:ntilde}
\tilde{n} = a \left( \nu - \nablaTCd - d \, \mathcal{H}(\nablaTCd) + O(d^2 \abs{\nabla d}) \right).
\end{equation}
The scalar $a \neq 0$ is arbitrary; the direction is well-defined.

\subsection*{Normalization and orientation}
To obtain a unit normal, we compute (since $\nu \perp \partial_j c$):
\[
\norm{\tilde{n}}^2 = a^2 \left( 1 + \abs{\nablaTCd + d \mathcal{H}(\nablaTCd)}^2 + O(d^2 \abs{\nabla d}^2) \right) = a^2 \left( 1 + \abs{\nablaTCd}^2 + O(d \abs{\nabla d}^2) \right).
\]

We choose the inward orientation. For $d \equiv \text{constant}$, the outer boundary $\partial \Omega$ is a parallel surface, its outward normal is $\nu$, and the inward normal is $-\nu$.
Hence we must have $\tilde{n}$ pointing opposite to $\nu$ when $\nablaTCd = 0$.
Setting $a = -1$ ensures $n(x) \to -\nu$ in that limit.
Thus the inward unit normal is
\begin{equation}\label{eq:n_final}
n(x) = -\frac{\nu - \nablaTCd - d \, \mathcal{H}(\nablaTCd) + O(d^2\abs{\nabla d} + d \abs{\nabla d}^2)}{\sqrt{1 + \abs{\nablaTCd}^2 + O(d \abs{\nabla d}^2)}}.
\end{equation}
This is exactly formula \eqref{eq:normal} in Lemma \ref{lem:normal} (with the higher-order terms explicitly expressed).

\subsection*{Scalar product with $\nu(c)$}
From \eqref{eq:n_final}, we immediately obtain
\begin{align*}
\inner{n(x), \nu(c)}
&= -\frac{\inner{\nu, \nu} - \inner{\nu, \nablaTCd} - d \inner{\nu, \mathcal{H}(\nablaTCd)} + \cdots}{\sqrt{1 + \abs{\nablaTCd}^2 + \cdots}} \\
&= -\frac{1 + O(d\abs{\nabla d}^2)}{\sqrt{1 + \abs{\nablaTCd}^2 + O(d\abs{\nabla d}^2)}} \\
&= -\frac{1}{\sqrt{1 + \abs{\nablaTCd}^2}} + O(d\abs{\nabla d}^2) \\
&= -1 + \frac{1}{2}\abs{\nablaTCd}^2 + O(d\abs{\nabla d}^2 + \abs{\nabla d}^4),
\end{align*}
where we used that $\nu$ is orthogonal to all tangent vectors ($\inner{\nu, \nablaTCd} = 0$, $\inner{\nu, \mathcal{H}(\nablaTCd)} = 0$), and expanded the square root via $(1+x)^{-1/2} = 1 - \frac{1}{2}x + O(x^2)$.
This proves formula \eqref{eq:scalar} of Lemma \ref{lem:normal}.

\subsection*{Validity of the expansion}
The series expansion requires the operator norm $\norm{d h} < 1$.
This is precisely the non-degeneracy condition $d(c)\kappa_i(c) < 1$ for all principal curvatures $\kappa_i$, as the eigenvalues of the Weingarten operator $h_i^j$ are exactly $\kappa_1,\dots,\kappa_{N-1}$.
Under this assumption, the Neumann series converges and the remainder terms are controlled by geometric constants depending on $\partial C$ and $\norm{\nabla^2 d}_{\infty}$.

\vspace{12pt}
\noindent This completes the corrected derivation of the inward normal formula.
\medskip

\section*{Acknowledgement}
This work was supported without any funding.

\section*{Conflicts of Interest}
The authors declare no conflicts of interest.


\end{document}